\newtheorem{theorem}{Theorem}[section]
\newtheorem{lemma}[theorem]{Lemma}
\newtheorem{corollary}[theorem]{Corollary}
\theoremstyle{definition}
\theoremstyle{remark}
\newtheorem{remark}[theorem]{Remark}
\newtheorem{example}[theorem]{Example}
\def\R{\mathbb{R}}
\def\haus{\mathcal{H}^{N-1}}
\def\gtilde{\widetilde{g}}
\def\htilde{\widetilde{h}}
\def\Ftilde{\widetilde{F}}
\def\Fhat{\widehat{F}}
\def\Fhatrad{\Fhat_{\textrm{rad}}}
\def\one{\mathbb{I}}
\def\Frad{F_{\textrm{rad}}}
\newcommand{\rad}[1][u]{\overline{#1}}
\newcommand{\ACloc}[1][{]0,R]}]{AC_{\textrm{loc}}(#1)}
\newcommand{\W}[1][1]{\mathcal{W}_{\mu}^{#1}}
\newcommand{\Wbase}[1][1]{\mathcal{W}_{\textrm{rad}}^{#1}}
\newcommand{\Wbasestar}[1][1]{\mathcal{W}_{\textrm{rad}}^{#1,\ast}}
\newcommand{\Flambda}[1][\lambda]{F_{\varphi, #1}}
\newcommand{\ulambda}[1][\lambda]{\rad_{\varphi, #1}}
\newcommand{\glambda}[1][\lambda]{g_{\varphi, #1}}
\newcommand{\plambda}[1][\lambda]{p_{\varphi, #1}}
\DeclareMathOperator{\conv}{conv}
\DeclareMathOperator{\argmin}{argmin}
\DeclareMathOperator{\interior}{int}
\DeclareMathOperator{\Dom}{Dom}
\begin{document}

 %amsart format
\title[Non-coercive radially symmetric variational problems]%
{Non-coercive radially symmetric \\ variational problems: \\
Existence, symmetry and convexity of minimizers}%
\author[G.~Crasta, A.~Malusa]{Graziano Crasta,  Annalisa Malusa}
\address[Graziano Crasta]{Dipartimento di Matematica ``G.\ Castelnuovo'', 
Sapienza Universit\`a di Roma\\
P.le A.\ Moro 5 -- 00185 Roma (Italy)}
\email{crasta@mat.uniroma1.it}

\address[Annalisa Malusa]{Dipartimento di Matematica ``G.\ Castelnuovo'', 
Sapienza Universit\`a di Roma\\
P.le A.\ Moro 5 -- 00185 Roma (Italy)}
\email{malusa@mat.uniroma1.it}

\keywords{Variational problems, radially symmetric minimizers, Euler--Lagrange inclusions}

\subjclass[2010]{49J30,49K21}

\date{April 23, 2019}

\begin{abstract}
We prove existence of radially symmetric solutions and validity of 
Euler--Lagrange necessary conditions for a class of variational problems
such that neither direct 
methods nor indirect 
methods of Calculus of Variations apply. 
We obtain existence and qualitative properties of the solutions
by means of \textsl{ad-hoc} superlinear perturbations of the functional having the same
minimizers of the original one. 
\end{abstract}

\maketitle

\section{Introduction}

This paper is concerned with the variational problem
\[
\min_{u\in W^{1,1}_0(B_R)}\int_{B_R} \left[
g(|x|, |\nabla u|) + h(|x|,u)
\right]\,dx\,,
\]
where $B_R\subseteq \R^N$ is the open ball centered at the origin and with 
radius $R>0$.

Under the sole assumptions of increasing monotonicity of the Lagrangian with 
respect to the gradient variable one can prove, by means of a 
symmetrization procedure proposed in \cite{Kro}, that the problem 
admits a one--dimensional reduction, obtained by evaluating the functional only 
on the set of radially symmetric functions (see Section \ref{s:radial}).

This reduction step leads to consider the minimum problem
\[
\min_{u\in\Wbase}
\int_0^R r^{N-1} [g(r, |u'(r)|) 
+ h(r,u(r))]\, dr
\]
on the space
\[
\Wbase := \left\{
u \in AC_{\textrm{loc}}(]0,R]):\
u(R) = 0,\
r^{N-1}\, |u'(r)|\in L^1(]0,R[)
\right\}\,.
\]
The qualitative features of the Lagrangian are that $g(r,\cdot)$ is convex (in fact 
this assumption can be dropped in the autonomous case, see Corollary 
\ref{c:nonconvex}) and with, at least, linear growth,
while  $h(r,t)$ is Lipschitz continuous in the $t$ variable. 
These assumptions do not assure that every minimizing sequence of the functional is 
precompact in $L^1$, and hence the direct methods of Calculus of Variations fails.

For this reason indirect methods, based on the solvability of the associated 
Euler--Lagrange equations, have often been adopted in the literature (see 
\cite{Cel04,CeTrZa,Clar93,ClarLo,C1,C2,C3,C4,CM1,CM2,CM3,Rock71}). 
Specifically, if the Lagrangian is convex with respect to both variables
$u$ and $|u'|$, then any solution of the Euler--Lagrange conditions
provides a minimizer, and vice-versa.

\smallskip
The main feature of the present work is that we do not require
convexity of the Lagrangian in the $u$ variable,
so that the above mentioned indirect methods cannot be implemented,
and a brand-new approach is needed.

Our starting points are an existence result and the validity of the 
Euler--Lagrange necessary conditions under the additional requirement that
$g(r,\cdot)$ has superlinear growth. 
These properties can be easily 
obtained applying well-known results 
(see Step~1 of the proof of Theorem~\ref{t:noncoercive}). 
Exploiting the necessary conditions,  
we obtain explicit \textsl{a-priori} estimates
on the derivative of minimizers of superlinear functionals,
that depend on the Lipschitz constant of $h(r,\cdot)$.

When $g(r,\cdot)$ satisfies only a linear growth condition,
say $g(r,s) \geq M\, s - C$ for some positive constants $M$ and $C$, 
and the Lipschitz constant of $h(r, \cdot)$ is not too large
compared with $M$
(see the compatibility relation (hgr) between $g$ and $h$ in the
statement of Theorem~\ref{t:noncoercive}),
then we proceed as follows.
As a first step,
we construct an \textsl{ad-hoc} superlinear perturbation of the slow growth functional,
for which we have a Lipschitz minimizer satisfying some \textsl{a-priori} estimates.
Then, relying on these estimates, we show that this function
is in fact a minimizer of the original slow-growth problem.

In some sense, our technique is reminiscent of the semiclassical approach,
based on the construction of barrier functions, for the minimization  
of functionals of the type $\int_{\Omega} L(\nabla u)\, dx$
on functions $u\in W^{1,1}(\Omega)$ satisfying some prescribed
boundary condition (see, e.g., \cite[Chapter~1]{GiDirect}).

As an application of our results, in Section \ref{s:convex} we
prove existence of convex Lipschitz continuous minimizers for 
variational problems with a constraint on the gradient. 
For related convexity results, obtained by
means of convex rearrangements, see \cite{Greco2012,Carlson}.  

Finally, we believe that our techniques
can be successfully implemented also for minimization problems related to
slow-growth integral functionals 
$\int_\Omega [ g(|\nabla u|) + h(u)]\, dx$
in a space of functions depending only on the distance from the boundary
of $\Omega$
(see, e.g., \cite{C6,C7,C8,CFG1,CFG2,CFG3,CFG4,CG1,CG2,CM4,CM5,CM9}).

\section{Notation and preliminaries}
\label{s:prelim}

In what follows $|\cdot|$ will denote the Euclidean norm in $\R^N$, $N\geq 1$,
and $B_R\subset \R^N$ is the open ball centered at the origin and with radius 
$R>0$.

We shall denote by $\overline{A}$ and $\interior{A}$ respectively the closure
and the interior of a set $A$, and by $\Dom{\varphi}$ the essential domain of
an extended real--valued function $\varphi\colon A \to ]{-}\infty,+\infty]$,
i.e. $\Dom{\varphi}=\{x\in A\colon\ \varphi(x)<+\infty\}$. 
We shall always 
consider \textsl{proper functions}, that is $\Dom{\varphi}\neq \emptyset$.

\medskip

Given a locally Lipschitz function $\varphi\colon A\subset\R \to \R$,
for every $x\in A$ we denote by $\partial \varphi(x)$ its
\textsl{generalized gradient} at $x$ in the sense
of Clarke (see \cite[Chapter~2]{Clar}).
We recall that, if $x$ is an interior point of $A$,
then $\partial \varphi(x)$ is a non-empty, convex, compact set
(see \cite[Proposition~2.1.2(a)]{Clar}).
Moreover, if $D\subset A$ denotes the set of points where $\varphi$
is differentiable, then
\[
\partial \varphi (x) =
\conv\left\{
\varphi'(x_j):\
(x_j) \subset D, \
x_j \to x
\right\}
\]
(see \cite[Theorem~2.5.1]{Clar}).
Hence, if $\varphi\colon\R\to\R$ is a monotone non-decreasing
$K$-Lipschitz function, then 
$\emptyset\neq\partial\varphi(x) \subseteq [0,K]$
for every $x\in\R$.

For notational convenience, if $\varphi$ also depends on an additional variable
$r\in\R$, we denote by $\partial\varphi(r,x)$ the generalized gradient of 
the function $x \mapsto \varphi(r,x)$.

\medskip

If $\varphi\colon\R\to ]{-}\infty, +\infty]$ is a lower semicontinuous convex function,
the generalized gradient $\partial\varphi(x)$ coincides with the subgradient 
(in the sense of convex analysis) at every point $x\in\interior\Dom\varphi$, 
and hence $\partial\varphi(x) = 
[\varphi'_-(x), \varphi'_+(x)]$, where $\varphi'_-(x)$ and $\varphi'_+(x)$
are the left and right derivative of $\varphi$ at $x$ 
(see \cite[Proposition~2.2.7]{Clar}). We shall often use the following 
implication, due to the monotonicity of 
the subgradient:
\[
p\in \partial \varphi(x),\ q\in \partial \varphi(y),\ \text{and}\ p<q
\ \Longrightarrow \ x\leq y.
\]

\medskip

If $\varphi\colon\R\to ]{-}\infty, +\infty]$, we denote by 
$\varphi^*$ its Fenchel--Legendre transform, or polar function
(see \cite[Section~I.4]{EkTem}).
With some abuse of notation, if
$\varphi\colon[0,+\infty[\to ]{-}\infty, +\infty]$, we use
$\varphi^*$ to denote the Fenchel--Legendre transform of
the even function $\R\ni z \mapsto \varphi(|z|)$, so that
\begin{equation*}
\varphi^*(p)=\sup_{x\in \R}\{p\, x-\varphi(|x|)\}.
\end{equation*}
We remark that, in this case, $\varphi^*$ is a lower semicontinuous convex even function. 

If $\varphi$ is a lower semicontinuous convex function, its subgradient and the subgradient 
of the polar function are related in the following way:
\[
p\in \partial \varphi(x) \Longleftrightarrow x \in \partial \varphi^*(p).
\]  
(see \cite[Corollary~I.5.2]{EkTem}).

\medskip

We say that $f\colon [0,R]\times \R \times [0,+\infty[\to ]{-}\infty, +\infty]$ 
is a
\textsl{normal integrand} if $f(r, \cdot, \cdot)$ is lower semicontinuous
for almost every (a.e.) $r\in [0,R]$, and there exists a Borel
function $\widehat{f}\colon [0,R]\times[0,+\infty[\to ]{-}\infty, +\infty]$
such that $\widehat{f}(r, \cdot,\cdot) = f(r,\cdot,\cdot)$
for a.e.\ $r\in [0,R]$
(see \cite[Definition~VIII.1.1]{EkTem}).

\section{Symmetry of minimizers}
\label{s:radial}

In this section we deal with the symmetry properties of minimizers in
$W^{1,1}_0(B_R)$ of functionals of the form
\[
F(u) := \int_{B_R} f(|x|, u, |\nabla u|)\, dx
\]
under very mild assumptions on the Lagrangian $f$.

Our aim is to prove that the minimization problem for $F$ in $W^{1,1}_0(B_R)$
is, in fact, equivalent to the minimization problem for the one--dimensional
functional
\begin{equation}\label{f:frad}
\Frad (u) := \int_0^R r^{N-1}\, f(r, u(r), |u'(r)|)\, dr,
\end{equation}
in the functional space
\begin{equation}\label{f:wbase}
\Wbase := \left\{
u \in AC_{\textrm{loc}}(]0,R]):\
u(R) = 0,\
r^{N-1}\, |u'(r)|\in L^1(]0,R[)
\right\}\,.
\end{equation}

\begin{remark}
Notice that the functional $\Frad$ is, up to a constant factor, the
functional $F$ evaluated on the radially symmetric functions belonging to
$W^{1,1}_0(B_R)$. In particular, we underline that every function 
$u\in\Wbase[1]$ satisfies
\[
r^{N-1}|u(r)| 
\leq \int_r^R \rho^{N-1}  |u'(\rho)|\, d\rho \leq \|\rho^{N-1}u'(\rho)\|_{L^1}
\qquad
\forall r\in ]0, R],
\]
so that $r^{N-1}|u(r)| \in L^\infty([0,R])$.
\end{remark}

We adopt a symmetrization procedure introduced in \cite{Kro}.
Given a representative of  $u\in W^{1,1}_0(B_R)$, and
$\theta\in \partial B_1$, let 
\begin{equation}\label{f:uteta}
u_\theta(x) := u(\theta |x|), \qquad x\in B_R\,,
\end{equation}
be the radial symmetric function obtained from the profile of $u$
along the straight line through $0$ and with direction $\theta$. 

In \cite[Lemma~3.1]{Kro} it is proved that  $u_\theta \in W^{1,1}_0(B_R)$ for 
a.e.\ $\theta\in \partial B_1$, and 
\begin{equation}\label{f:gradut}
|\nabla u_\theta(x)| = \left| \theta \cdot \nabla u(\theta |x|)
\, \frac{x}{|x|}
\right|
\leq |\nabla u(\theta |x|)|\,.
\end{equation} 

Following the lines of the proof of \cite[Theorem~3.4]{Kro}, we show that,
for some $\theta$,
$u_\theta$ is a better competitor than $u$ in the minimization problem for $F$.

\begin{theorem}\label{t:radial}
Let $f\colon [0,R]\times\R\times [0,+\infty[ \to ]{-}\infty, +\infty]$ be a
normal integrand such that for almost every $(r, t)\in [0,R]\times\R$,
the map $s\mapsto f(r,t,s)$ is monotone non-decreasing.
Then for every $u\in W^{1,1}_0(B_R)$ there exists a radially symmetric function
$v\in W^{1,1}_0(B_R)$ such that $F(v) \leq F(u)$.
In particular, if $F$ admits minimizers in $W^{1,1}_0(B_R)$,
then it admits a radially symmetric minimizer.

If, in addition,
for almost every $(r, t)\in [0,R]\times\R$,
the map $s\mapsto f(r,t,s)$ is strictly monotone increasing,
then every minimizer of $F$ in $W^{1,1}_0(B_R)$ is a
radially symmetric function.
\end{theorem}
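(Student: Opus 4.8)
The plan is to exploit the Kr鰒n--type symmetrization $u \mapsto u_\theta$ from \eqref{f:uteta} together with an averaging argument over $\theta \in \partial B_1$. First I would recall the two facts already quoted from \cite{Kro}: for a.e.\ $\theta$ the function $u_\theta$ lies in $W^{1,1}_0(B_R)$, and the pointwise gradient bound \eqref{f:gradut} holds, namely $|\nabla u_\theta(x)| \le |\nabla u(\theta|x|)|$. Using the monotonicity of $s \mapsto f(r,t,s)$ and the fact that $u_\theta(x)$ and $u(\theta|x|)$ have the same value, this gives the pointwise inequality
\[
f(|x|, u_\theta(x), |\nabla u_\theta(x)|) \le f(|x|, u(\theta|x|), |\nabla u(\theta|x|)|)
\qquad \text{for a.e.\ } x \in B_R.
\]
Integrating over $B_R$ yields $F(u_\theta) \le \int_{B_R} f(|x|, u(\theta|x|), |\nabla u(\theta|x|)|)\, dx$ for a.e.\ $\theta$.

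The next step is to integrate this last inequality in $\theta$ over $\partial B_1$ (with normalized surface measure) and to compute the right-hand side via the coarea/polar-coordinates formula. Writing $x = \rho\sigma$ with $\rho = |x| \in ]0,R]$ and $\sigma \in \partial B_1$, the inner integrand $f(|x|, u(\theta|x|), |\nabla u(\theta|x|)|)$ depends on $x$ only through $|x| = \rho$ and the direction $\theta$ (not $\sigma$), so the $\sigma$-integral just contributes the measure of $\partial B_1$. Hence, after dividing out $\haus(\partial B_1)$, the average over $\theta$ of the right-hand side equals, up to the constant $N\omega_N$ relating $|B_R|$-measure to the radial measure $\rho^{N-1}\,d\rho$, precisely the quantity $\int_{\partial B_1}\!\!\int_0^R \rho^{N-1} f(\rho, u(\theta\rho), |\nabla u(\theta\rho)|)\, d\rho\, d\theta / \haus(\partial B_1)$, which is itself $\le \fint_{\partial B_1} F(u)\, d\theta \cdot (\text{const})$ — in fact it collapses to a constant multiple of $F(u)$ because the radial profile of $\nabla u$ along $\theta$ integrated over all $\theta$ reconstructs the full gradient integral. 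The upshot is
\[
\fint_{\partial B_1} F(u_\theta)\, d\theta \le F(u).
\]
By the mean-value property there exists $\theta_0$ with $F(u_{\theta_0}) \le F(u)$; setting $v := u_{\theta_0}$ proves the first assertion. If $F$ has a minimizer $u$, then $v$ is a radially symmetric minimizer, giving the second assertion.

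For the final (strict) statement, suppose $s \mapsto f(r,t,s)$ is strictly increasing and let $u$ be any minimizer. Then $F(v) \le F(u)$ forces $F(v) = F(u)$, hence equality must propagate back through the chain: the averaged inequality is an equality, so $F(u_\theta) = F(u)$ for a.e.\ $\theta$, and for a.e.\ such $\theta$ the pointwise inequality above must be an equality a.e.\ in $B_R$. By strict monotonicity this means $|\nabla u_\theta(x)| = |\nabla u(\theta|x|)|$ for a.e.\ $x$, which by \eqref{f:gradut} (the inequality there coming from $|\theta \cdot \nabla u(\theta|x|)\,\tfrac{x}{|x|}| \le |\nabla u(\theta|x|)|$) forces $\nabla u(\theta|x|)$ to be parallel to $x/|x| = \theta$ for a.e.\ $x$ on a.e.\ ray; equivalently, for a.e.\ radius $\rho$, $\nabla u(\theta\rho)$ is radial. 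Running this for a.e.\ $\theta$ and using Fubini to pass from ``radial gradient on a.e.\ ray'' to ``$u$ depends only on $|x|$'' (the tangential component of $\nabla u$ vanishes a.e., so $u$ is constant on a.e.\ sphere), one concludes $u$ is radially symmetric.

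The main obstacle I anticipate is the bookkeeping in the averaging step: making the change of variables $x \mapsto (|x|, \theta)$ rigorous, keeping track of the constant $\haus(\partial B_1)$ and the Jacobian $\rho^{N-1}$, and — more delicately — justifying that the map $(x,\theta) \mapsto f(|x|, u(\theta|x|), |\nabla u(\theta|x|)|)$ is jointly measurable so that Fubini applies (this is where the normal-integrand hypothesis and the measurable selection of a good representative of $\nabla u$ from \cite[Lemma~3.1]{Kro} enter). The strict-monotonicity argument requires, in addition, the standard but slightly technical passage from ``$\nabla u$ is radial along a.e.\ ray'' to ``$u$ is a function of $|x|$ alone'', which one handles by writing $\nabla u$ in polar coordinates and observing that the angular derivatives vanish a.e.
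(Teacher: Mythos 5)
Your proposal is correct and follows essentially the same route as the paper: the pointwise comparison $f(|x|,u_\theta(x),|\nabla u_\theta(x)|)\le f(|x|,u(\theta|x|),|\nabla u(\theta|x|)|)$ integrated and averaged over $\theta\in\partial B_1$ is exactly the paper's inequality \eqref{f:ine1}, the mean-value selection of $\theta_0$ gives the first claim, and the equality/strict-monotonicity analysis forcing $\nabla u(r\theta)$ parallel to $\theta$ is the paper's argument (which then delegates the final passage to radial symmetry to \cite[Lemma~3.3]{Kro}, as you anticipate). The only cosmetic issue is the muddled phrasing in your averaging step (the right-hand side equals $F(u)$ exactly, with no extra constants, once the $\sigma$-integral and the normalization by $\mathcal{H}^{N-1}(\partial B_1)$ cancel).
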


\begin{proof}
Let $u$ be a function in $W^{1,1}_0(B_R)$ such that $F(u)<+\infty$, and let 
$u_\theta$ be the 
radially symmetric function defined in \eqref{f:uteta}.
We claim that, 
\begin{equation}\label{f:ine1}
\frac{1}{\mathcal{H}^{N-1}(\partial B_1)} \int_{\partial B_1} F(u_\theta)\, 
d\theta
\leq F(u)\,,
\end{equation}
where $\haus$ is the $(N-1)$-dimensional Hausdorff measure.
Namely, observing that  
\[
u_\theta(r\omega)=u_\theta(r\theta)=u(r\theta)\,,
\qquad
\forall \omega, \theta\in \partial B_1,
\]
using \eqref{f:gradut} and the 
monotonicity property of the Lagrangian $f$, 
we obtain that
\[
\begin{split}
\frac{1}{\mathcal{H}^{N-1}(\partial B_1)}
\int_{\partial B_1} F(u_\theta)\, d\theta & =
\int_{\partial B_1} \int_{\partial B_1}
\int_0^R f(r, u_\theta(r\omega), |\nabla u_\theta(r\omega)|)\,
r^{N-1}\, dr\, d\omega\, d\theta
\\ & =
\int_{\partial B_1} \int_{\partial B_1}
\int_0^R f(r, u_\theta(r\theta), |\nabla u_\theta(r\theta)|)\,
r^{N-1}\, dr \, d\omega\, d\theta
\\ & \leq
\int_{\partial B_1} \int_{\partial B_1}
\int_0^R f(r, u(r\theta), |\nabla u(r\theta)|)\,
r^{N-1}\, dr\,  d\omega\, d\theta
=   F(u).
\end{split}
\]
From \eqref{f:ine1} follows that there exists a set $\Theta \subseteq \partial 
B_1$, with $\mathcal{H}^{N-1}(\Theta)>0$, such that $F(u_\theta) \leq F(u)$
for every $\theta\in\Theta$. Moreover, if $u$ is a minimizer for $F$, then
$F(u_\theta) \geq F(u)$ for a.e.\ $\theta \in \partial B_1$, and
\eqref{f:ine1} implies that
\begin{equation}\label{f:eq1}
F(u_\theta) = F(u)
\qquad
\text{for $\mathcal{H}^{N-1}$--a.e.}\ \theta \in \partial B_1,
\end{equation}
hence almost every $u_\theta$ is a (radially symmetric)
minimizer of $F$.

\smallskip

Assume now that for almost every $(r, t)\in [0,R]\times\R$,
the map $s\mapsto f(r,t,s)$ is strictly monotone increasing, and let
$u$ be a minimizer for $F$.  
From the computation above,
we deduce that \eqref{f:eq1} holds if and only if
\[
f(r, u_\theta(r\theta), |\nabla u_\theta(r\theta)|)
=
f(r, u(r\theta), |\nabla u(r\theta)|)
\quad
\text{for $\mathcal{L}\times\mathcal{H}^{N-1}$--a.e.}\ (r,\theta)\in 
[0,R]\times \partial B_1.
\]
Since $u_\theta(r\theta) = u(r\theta)$ for a.e.\ $(r, \theta)$,
from the strict monotonicity assumption on $f$ we deduce that
$ |\nabla u_\theta(r\theta)| = |\nabla u(r\theta)|$
for $\mathcal{L}\times\mathcal{H}^{N-1}$-a.e.\ $(r, \theta)$,
hence, from \eqref{f:gradut}, we obtain that $\nabla u(r\theta)$ is parallel to 
$\theta$ and then
$u$ is radially symmetric
(see \cite[Lemma~3.3]{Kro}).
\end{proof}

As a consequence of Theorem \ref{t:radial}, we obtain the following 
1--dimensional reduction of the minimum problem.

\begin{corollary}\label{c:radial}
Let $f$ be as in Theorem~\ref{t:radial}.
Then the minimization problem
\begin{equation}\label{f:minJ}
\min\{F(u):\ u\in W^{1,1}_0(B_R)\}
\end{equation}
admits a solution if and only if 
the one-dimensional minimization problem
\begin{equation}\label{f:minR}
\min\{\Frad(u):\ u\in \Wbase[1]\}
\end{equation}
admits a solution, where $\Frad$ and $\Wbase[1]$ are defined in 
\eqref{f:frad} and \eqref{f:wbase} respectively. 
\end{corollary}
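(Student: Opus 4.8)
The plan is to base everything on a one-to-one correspondence between radially symmetric functions in $W^{1,1}_0(B_R)$ and the functions of $\Wbase$, under which the functionals $F$ and $\Frad$ coincide up to the fixed multiplicative constant $c_N:=\haus(\partial B_1)$; the equivalence of the two minimum problems then follows by combining this with Theorem~\ref{t:radial}. First I would make the correspondence precise. If $u\in W^{1,1}_0(B_R)$ is radially symmetric, its radial profile $\overline u$ belongs to $\ACloc$ (restrict $u$ to annuli $0<a<|x|<R$ and use that one-variable $W^{1,1}$ functions are absolutely continuous), satisfies $\overline u(R)=0$ (vanishing trace) and $|\nabla u(x)|=|\overline u'(|x|)|$ for a.e.\ $x$; a change of variables in polar coordinates then gives $r^{N-1}|\overline u'(r)|\in L^1(]0,R[)$, so $\overline u\in\Wbase$, and the same computation yields $F(u)=c_N\,\Frad(\overline u)$. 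Conversely, given $v\in\Wbase$, set $u(x):=v(|x|)$: then $u$ is radial with $|\nabla u(x)|=|v'(|x|)|$ for a.e.\ $x$, so $\nabla u\in L^1(B_R)$ by the integrability condition in the definition of $\Wbase$; the estimate $r^{N-1}|v(r)|\le\|\rho^{N-1}v'\|_{L^1}$ in the Remark following \eqref{f:wbase} gives $u\in L^1(B_R)$; and $v\in\ACloc$ with $v(R)=0$ yields a vanishing trace on $\partial B_R$. Hence $u\in W^{1,1}_0(B_R)$, $\overline u=v$, and again $F(u)=c_N\,\Frad(v)$.

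Granting the correspondence, both implications are immediate. For the ``only if'' part, assume $F$ attains its minimum on $W^{1,1}_0(B_R)$; by Theorem~\ref{t:radial} it is attained at some radially symmetric $u^\ast$, and then $v^\ast:=\overline{u^\ast}\in\Wbase$ is a minimizer of $\Frad$, since for every $w\in\Wbase$ the function $w(|\cdot|)$ is admissible for $F$ and hence $c_N\Frad(v^\ast)=F(u^\ast)\le F(w(|\cdot|))=c_N\Frad(w)$. For the ``if'' part, let $v^\ast$ minimize $\Frad$ on $\Wbase$ and set $u^\ast:=v^\ast(|\cdot|)\in W^{1,1}_0(B_R)$; given an arbitrary $w\in W^{1,1}_0(B_R)$, Theorem~\ref{t:radial} furnishes a radially symmetric $\widetilde w\in W^{1,1}_0(B_R)$ with $F(\widetilde w)\le F(w)$, whence $\overline{\widetilde w}\in\Wbase$ and
\[
F(u^\ast)=c_N\,\Frad(v^\ast)\le c_N\,\Frad(\overline{\widetilde w})=F(\widetilde w)\le F(w),
\]
so $u^\ast$ minimizes $F$.

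The only point requiring any care is the direction ``$\Wbase\to W^{1,1}_0(B_R)$'' of the correspondence: a priori an element $v$ of $\Wbase$ is defined only on $]0,R]$ and carries a bound on the weighted derivative $r^{N-1}|v'(r)|$ rather than on $|v'|$ itself, so one has to verify that $v(|\cdot|)$ is globally integrable on $B_R$ and has an $L^1$ gradient near the origin; this is exactly what the $L^\infty$ bound on $r^{N-1}|v(r)|$ recorded in the Remark, together with the change of variables in polar coordinates, provides. Everything else is routine: the normal-integrand structure makes the composed function $r\mapsto f(r,\overline u(r),|\overline u'(r)|)$ measurable, so that $\Frad(\overline u)$ is well defined, and the remaining content is a direct transfer through Theorem~\ref{t:radial}. (As a byproduct one also gets that the two minimum values agree up to the factor $c_N$.)
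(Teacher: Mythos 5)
Your proof is correct and follows essentially the same route as the paper: both rest on the correspondence between radial elements of $W^{1,1}_0(B_R)$ and $\Wbase[1]$ (under which $F$ equals $\haus(\partial B_1)\,\Frad$, as noted in the Remark after \eqref{f:wbase}) combined with the symmetrization of Theorem~\ref{t:radial}. The only cosmetic differences are that you argue the ``if'' direction directly rather than by contradiction and spell out the verification that $v(|\cdot|)\in W^{1,1}_0(B_R)$ for $v\in\Wbase[1]$, which the paper leaves implicit.
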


\begin{proof}
If problem \eqref{f:minJ} admits a solution $u\in W^{1,1}_0(B_R)$,
then by Theorem~\ref{t:radial} there exists a 
radially symmetric function $v\in W^{1,1}_0(B_R)$ such that $F(v) \leq F(u)$,
hence $\rad[v] (r) := v(|x|)$ is a solution to problem~\eqref{f:minR}.

Assume now that problem~\eqref{f:minR} admits a solution $\rad[u]\in\Wbase[p]$,
and let us prove that $u(x) := \rad[u](|x|)$ is a solution
to~\eqref{f:minJ}.
Namely, if we assume by contradiction that there exists a function
$v \in W^{1,1}_0(B_R)$ such that $F(v) < F(u)$,
then by Theorem~\ref{t:radial} there exists a radially symmetric
function $w\in W^{1,1}_0(B_R)$ such that $F(w) \leq F(v)$, so that
the function $\rad[w](r) := w(|x|)$ satisfies
$\Frad(\rad[w]) <  \Frad(\rad[u])$, a contradiction.
\end{proof}

\section{Existence of minimizers and Euler--Lagrange inclusions}
\label{s:min}

In this section
we focus our attention to functionals of the form
\[
F(u) := \int_{B_R} \left[
g(|x|, |\nabla u|) + h(|x|,u)
\right]\,dx,
\qquad u\in W^{1,1}_0(B_R),
\]

whose corresponding one-dimensional functional is
\[
\Frad(u) := \int_0^R r^{N-1} [g(r, |u'(r)|) + h(r, u(r))]\, dr,
\qquad
u\in\Wbase.
\]

We prove  the existence of radially symmetric
Lipschitz continuous minimizers, and the validity of necessary optimality conditions
of Euler--Lagrange type, when $g$ is a convex function with possibly linear growth
in the gradient variable, and 
$h$ is a Lipschitz continuous function with respect to $u$.    

As usual, the Euler--Lagrange conditions
involve a pair $(\rad,p)$, where $\rad$ is a minimizer in $\Wbase$, 
while the function $p$ belongs to the space
\[
\Wbasestar := \left\{
p\in AC([0,R]):\
p(0) = 0,\
r^{1-N} p'(r) \in L^1(]0,R[)
\right\}\,.
\]
We call $p$ a \textsl{momentum} associated with $\rad$.

\begin{theorem}
\label{t:noncoercive}
Let $g\colon [0,R]\times [0,+\infty[ \to [0,+\infty]$, and $h\colon 
[0,R]\times\R\to\R$ satisfy:
\begin{itemize}
\item[(g1r)]
$g$
is a normal integrand, the function 
$z \mapsto g(r, |z|)$ is convex for a.e.\ $r\in [0,R]$,
and $r^{N-1} g(r, 0) \in L^1(]0,R[)$.

\item[(g2r)]
There exists a function $\psi\colon [0, +\infty[\to [0, +\infty[$
such that
\[
\text{for a.e.}\ r\in [0,R]:\quad
g(r,s) - g(r,0) \geq \psi(s)
\quad
\forall s\geq 0, %s\geq s_0,
\]
and $M := \liminf_{s\to +\infty} {\psi(s)}/{s} > 0$.

\item[(h1r)]
$h$ is a Borel function,
$r^{N-1} h(r, 0) \in L^1(]0,R[)$,
and there exists $H_0 \in L^1(]0,R[)$ such that
\[
\text{for a.e.}\ r\in [0,R]:
\quad
|h(r, t) - h(r, \tau)| \leq H_0(r)\, |t-\tau|
\qquad
\forall t, \tau\in\R. 
\]

\item[(hgr)]
The functions $g$ and $h$ are related by the condition
\[
M_0 := 
\sup_{r\in ]0,R]} r^{1-N} \int_0^r \rho^{N-1} H_0(\rho)\, d\rho
< M.
\]
\end{itemize}
Then the following holds true.
\begin{itemize}
\item[(i)]
$F$ admits a radially symmetric minimizer in $W^{1,1}_0(B_R)$, and $\Frad$
admits a minimizer in $\Wbase$.
\item[(ii)]
Every minimizer of $\Frad$ is Lipschitz continuous.
\item[(iii)]
For every minimizer $\rad\in\Wbase$ of $\Frad$
there exists $p\in \Wbasestar$ such that
the following Euler--Lagrange inclusions hold:
\begin{gather}
p'(r) \in r^{N-1} \partial h(r, \rad(r)),
\qquad\text{for a.e.}\ r\in [0,R],
\label{f:EL1}\\
p(r) \in r^{N-1} \partial g(r, |\rad'(r)|), 
\qquad\text{for a.e.}\ r\in [0,R].
\label{f:EL2}
\end{gather}
\end{itemize}
\end{theorem}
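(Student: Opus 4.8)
The plan is to follow the three-step strategy announced in the introduction, splitting the argument according to whether $g(r,\cdot)$ has superlinear growth or only linear growth. The first step treats the superlinear case, where classical results apply: if $M=+\infty$ (say $\psi(s)/s\to+\infty$), then $\Frad$ is coercive on $\Wbase[1]$, every minimizing sequence is precompact in $W^{1,1}$ by the Dunford--Pettis theorem (the superlinear bound on $g$ gives equi-integrability of $r^{N-1}|u'|$), and lower semicontinuity follows from convexity of $g(r,\cdot)$ via (g1r) together with the Lipschitz — hence continuous — dependence of $h$ in $u$ from (h1r). This yields existence of a minimizer $\rad$. The Euler--Lagrange inclusions \eqref{f:EL1}--\eqref{f:EL2} in this case come from the standard necessary conditions for one-dimensional variational problems with convex (but possibly nonsmooth) integrand in $u'$; one introduces the momentum $p$ as a primitive of $r^{N-1}\partial h(r,\rad)$, normalized by $p(0)=0$, and uses the convex duality $p(r)\in r^{N-1}\partial g(r,|\rad'(r)|)\Leftrightarrow |\rad'(r)|\in\partial g^*(r,\cdot)$. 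Integrating the momentum equation and using (hgr)-type bounds on $H_0$, one extracts an \emph{a priori} pointwise bound $|p(r)|\le r^{N-1}\Lambda$ for a constant $\Lambda<g$-dependent threshold, which by the duality relation forces $|\rad'(r)|$ to be bounded, i.e. $\rad$ is Lipschitz.

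The heart of the matter is the genuinely linear-growth case $M<+\infty$. Here the plan is: (a) fix a superlinear modification. Choose a convex function $\chi\colon[0,+\infty[\to[0,+\infty[$ with $\chi(s)=0$ for $s\le S_0$ and $\chi(s)/s\to+\infty$, where $S_0$ is a threshold determined below; set $g_\varepsilon(r,s):=g(r,s)+\varepsilon\chi(s)$ and let $\Frad^\varepsilon$ be the corresponding functional. By Step 1, $\Frad^\varepsilon$ has a Lipschitz minimizer $\rad_\varepsilon$ with momentum $p_\varepsilon$ satisfying $p_\varepsilon(r)\in r^{N-1}\partial g_\varepsilon(r,|\rad_\varepsilon'(r)|)$ and $p_\varepsilon'(r)\in r^{N-1}\partial h(r,\rad_\varepsilon(r))$. (b) Derive an \emph{$\varepsilon$-independent} a priori estimate. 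From \eqref{f:EL1} and $|\partial h|\le H_0$ one gets $|p_\varepsilon(r)|\le\int_0^r\rho^{N-1}H_0(\rho)\,d\rho\le M_0\, r^{N-1}$ by (hgr); since $M_0<M$, there is $\delta>0$ with $M_0<M-\delta$. Using $M=\liminf\psi(s)/s$, pick $S_0$ so large that $g(r,s)-g(r,0)\ge\psi(s)\ge (M-\delta)s$ for $s\ge S_0$, which means $(M-\delta)\in\partial g(r,\cdot)$ only at arguments $\ge$ something finite — more precisely, the convex-duality/monotonicity inequality shows that if $|\rad_\varepsilon'(r)|>S_0$ then any element of $\partial g_\varepsilon(r,|\rad_\varepsilon'(r)|)$ is $\ge (M-\delta)r^{N-1}>|p_\varepsilon(r)|$, contradicting \eqref{f:EL2}; hence $|\rad_\varepsilon'(r)|\le S_0$ for a.e.\ $r$, uniformly in $\varepsilon$. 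In particular $\chi(|\rad_\varepsilon'|)=0$, so $\rad_\varepsilon$ is also a critical point for the unperturbed problem's Euler--Lagrange system.

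(c) Pass to the limit and conclude minimality for the original $\Frad$. The uniform Lipschitz bound gives, along a subsequence, $\rad_\varepsilon\to\rad$ uniformly and $\rad_\varepsilon'\rightharpoonup\rad'$ weakly-$*$ in $L^\infty$, with $\rad\in\Wbase$ and $|\rad'|\le S_0$; similarly $p_\varepsilon\to p$ in $\Wbasestar$ with the inclusions \eqref{f:EL1}--\eqref{f:EL2} surviving in the limit (here one uses closedness of the graph of $\partial g(r,\cdot)$ and $\partial h(r,\cdot)$, plus $g_\varepsilon\to g$ on the relevant compact range of arguments). To see $\rad$ minimizes $\Frad$: for any competitor $v\in\Wbase$ we have, since $\rad_\varepsilon$ minimizes $\Frad^\varepsilon$ and $\chi\ge0$,
\[
\Frad(\rad_\varepsilon)\le\Frad^\varepsilon(\rad_\varepsilon)\le\Frad^\varepsilon(v)=\Frad(v)+\varepsilon\int_0^R r^{N-1}\chi(|v'(r)|)\,dr;
\]
if $\int r^{N-1}\chi(|v'|)<+\infty$ (which we may assume by first approximating $v$ by truncations of its gradient, using the linear growth of $g$ to control the error) the right-hand side tends to $\Frad(v)$, while $\liminf_\varepsilon\Frad(\rad_\varepsilon)\ge\Frad(\rad)$ by lower semicontinuity; hence $\Frad(\rad)\le\Frad(v)$. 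This proves (i)--(iii) for a particular minimizer; for (ii)--(iii) applied to an \emph{arbitrary} minimizer $\rad$ of $\Frad$, one repeats the a priori argument of Step (b) directly: associate a momentum by solving \eqref{f:EL1}, use (hgr) to bound $p$, and deduce via \eqref{f:EL2} and convex duality that $|\rad'|\le S_0$ a.e., giving Lipschitz regularity and the full Euler--Lagrange system.

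The main obstacle is Step (b): obtaining the a priori bound on $|\rad_\varepsilon'|$ that is uniform in $\varepsilon$ and depends only on the structural constants $M$, $M_0$ (not on $\chi$). This is precisely where the compatibility hypothesis (hgr), $M_0<M$, is used, and where the subgradient-monotonicity implication quoted in the preliminaries ($p<q$, $p\in\partial g(x)$, $q\in\partial g(y)\Rightarrow x\le y$) does the decisive work. A secondary technical point is justifying that the Euler--Lagrange inclusions genuinely hold for every minimizer (not just the constructed one), which requires the independent a priori argument sketched at the end rather than a limiting procedure.
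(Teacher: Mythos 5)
Your overall architecture --- settle the superlinear case by classical results, then perturb the slow-growth Lagrangian by an \emph{ad hoc} superlinear term and use (hgr) to get a gradient bound on the perturbed minimizers that is independent of the perturbation --- is exactly the paper's strategy, and your steps (a)--(c) do deliver part (i). (The paper even avoids your limit passage in $\varepsilon$: it observes that the perturbed minimizer $\ulambda$ satisfies $\varphi(|\ulambda'|)=0$, hence $\Flambda(\ulambda)=\Frad(\ulambda)$ with a value $m$ independent of $\lambda$ and $\varphi$, and that any competitor $v$ with $\Frad(v)<m$ would, after choosing $\varphi$ by de~La~Vall\'ee~Poussin so that $\int_0^R r^{N-1}\varphi(|v'|)\,dr<+\infty$ and taking $\lambda$ small, contradict $m=\min\Flambda$.) The genuine gap is in (ii)--(iii) for an \emph{arbitrary} minimizer $\rad$ of the slow-growth functional. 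Your construction only produces \emph{one} Lipschitz minimizer satisfying the Euler--Lagrange system, and your proposed fix --- ``associate a momentum by solving \eqref{f:EL1}, use (hgr) to bound $p$, and deduce via \eqref{f:EL2} \dots that $|\rad'|\le S_0$'' --- is circular. A primitive of a measurable selection of $r^{N-1}\partial h(r,\rad(r))$ certainly exists and is bounded by $M_0\,r^{N-1}$, but nothing guarantees that this particular $p$ satisfies \eqref{f:EL2}: the simultaneous validity of \eqref{f:EL1}--\eqref{f:EL2} is precisely the necessary condition to be proved, and it cannot be obtained from the standard theorem (Clarke's Theorem 4.2.2, invoked in Step 1) because for linear-growth $g$ the Hamiltonian involves $g^*(r,\cdot)$, which has bounded domain, so the hypotheses fail; this failure of the indirect method is the whole point of the theorem.

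The paper closes this gap with a separate contradiction argument that your proposal is missing. Assume a minimizer $\rad$ is not Lipschitz, set $\delta=(M-M_0)/3$, $\widehat{\sigma}(r)=g_-^{*'}(r,M_0+\delta)$ and $\sigma_1$ with $\psi(\sigma_1)/\sigma_1=M_0+2\delta$, and replace $g(r,\cdot)$ above the level $\widehat{\sigma}(r)$ by its supporting affine function of slope $M_0+\delta$ plus a superlinear tail $\varphi_\zeta$, where $\zeta$ is calibrated by inequality \eqref{f:diseq} (this uses that $\{|\rad'|>a\}$ has positive measure for every $a$) so that $\Ftilde(\rad)<\Frad(\rad)$. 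By Step 1 and the a priori estimate, every minimizer $\widetilde{u}$ of $\Ftilde$ satisfies $|\widetilde{u}'|\le\widehat{\sigma}(r)$, whence $\Frad(\widetilde{u})=\Ftilde(\widetilde{u})\le\Ftilde(\rad)<\Frad(\rad)$, contradicting minimality of $\rad$. Only after this is (iii) obtained in the correct logical order: a Lipschitz minimizer with $|\rad'|\le\sigma$ is also a minimizer of $\Flambda$ for $a>\sigma\vee\sigma_0$ (same minimum value $m$, and $\Flambda(\rad)=\Frad(\rad)$), so it inherits the Euler--Lagrange inclusions from the superlinear theory, and $\partial\glambda(r,|\rad'|)=\partial g(r,|\rad'|)$ there. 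A secondary point: in your Step 1 the coercivity claim is not immediate, since $h(r,u)$ may behave like $-H_0(r)|u|$; one first needs the identity $\int_0^R r^{N-1}H_0(r)|u|\,dr\le\int_0^R P(r)|u'|\,dr\le M_0\int_0^R r^{N-1}|u'|\,dr$ with $P(r)=\int_0^r\rho^{N-1}H_0(\rho)\,d\rho$ (integration by parts using $u(R)=0$) to reduce to a non-negative, coercive integrand before applying the direct method.
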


\begin{remark}
In (g2r) 
it is not restrictive to assume that 
$\psi$ is a non-decreasing function, with $\psi(0) = 0$,
and that $\R\ni z \mapsto \psi(|z|)$ is convex and smooth (possibly
replacing $\psi$ with a suitable regularization of its convex envelope).
As a consequence of these assumptions, the function $s\mapsto \psi(s) 
/ s$ turns out to be strictly increasing
in $]s_0,+\infty[$, where $s_0 := \max\{\psi = 0\}$,
and hence, for every $m\in ]0,M[$, there exists (a unique) $\sigma > s_0$
such that $\psi(\sigma) / \sigma = m$. 
In the following we shall always assume that the function $\psi$
in (g2r) satisfies these additional properties.
We recall that, if $M=+\infty$, such a function 
is called a \textsl{Nagumo function} (see, e.g., \cite[Section 10.3]{Ces}).
\end{remark}

\begin{remark}
If $g$ satisfies (g1r) and (g2r), then
\begin{gather}
]{-}M, M[ \subset \Dom g^*(r, \cdot),
\qquad
\text{for a.e.}\ r\in [0,R],
\label{f:domgs}
\\
r^{N-1} g^*(r, m) \in L^1(]0,R[),
\qquad \forall m\in ]{-}M, M[. 
\label{f:intg}
\end{gather}
Specifically, by symmetry it is enough to show that,
for every $m\in ]0,M[$,
$m\in \Dom g^*(r, \cdot)$ for a.e.\ $r\in[0,R]$
and \eqref{f:intg} holds.
Let $m\in ]0,M[$ and let $\sigma>0$ satisfy $\psi(\sigma)/\sigma = m$.
Then
\[
\frac{g(r,s) - g(r, 0)}{s} \geq \frac{\psi(s)}{s}
\geq \frac{\psi(\sigma)}{\sigma} = m,
\qquad \forall s\geq \sigma,
\]
so that 
$-g(r,0)\leq g^*(r,m) = \sup_{s\geq 0} [m\,s - g(r,s)] \leq m\,\sigma - g(r, 0)$.
Hence, \eqref{f:domgs} and~\eqref{f:intg} follow from the assumption
$r^{N-1} g(r,0) \in L^1(]0,R[)$.
\end{remark}

\begin{remark}
\label{r:H0}
If $h$ satisfies (h1r), then the quantity $M_0$ defined in (hgr) is always finite, since
\[
r^{1-N} \int_0^r \rho^{N-1} H_0(\rho)\, d\rho
\leq \int_0^r  H_0(\rho)\, d\rho
\leq \|H_0\|_{L^1}\,,
\qquad \forall r\in ]0,R].
\]
\end{remark}

\medskip
We start by proving some \textit{a-priori} estimates for the
solutions of the Euler--Lagrange inclusions. 

\begin{lemma}
\label{l:EL}
Let $(\rad, p)\in\Wbase\times\Wbasestar$.
Then the following hold:
\begin{itemize}
\item[(i)]
If $h$ satisfies (h1r) %and (hgr) (with $M=+\infty$),
and $(\rad, p)$ satisfies \eqref{f:EL1}, then
$r^{1-N} |p(r)| \leq M_0$ for every $r\in ]0,R]$,
where $M_0$ is the (finite) quantity defined in (hgr).
\item[(ii)]
If $g$ and $h$ satisfy (g1r)-(g2r)-(h1r)-(hgr),
and the pair $(\rad, p)$ satisfies
the Euler--Lagrange inclusions \eqref{f:EL1}--\eqref{f:EL2},
then
\begin{equation}
\label{f:apriori}
|\rad'(r)| \leq
\sigma(r) := g_+^{*'}(r, M_0),
\qquad \text{for a.e.}\ r\in [0,R].
\end{equation}
Moreover, if $\sigma_0>0$ is defined by
\begin{equation}
\label{f:sigma0}
\frac{\psi(\sigma_0)}{\sigma_0} = M_0,
\end{equation}
then $\sigma(r) \leq \sigma_0$ for a.e.\ $r\in [0,R]$,
i.e., $\rad$ is Lipschitz continuous and
\begin{equation}\label{f:lip}
|\rad'(r)| \leq \sigma_0,
\qquad \text{for a.e.}\ r\in [0,R].
\end{equation}
\end{itemize}
\end{lemma}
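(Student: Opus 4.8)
To prove (i), observe that since $p\in\Wbasestar$ is absolutely continuous with $p(0)=0$, we have $p(r)=\int_0^r p'(\rho)\,d\rho$ for every $r\in[0,R]$. By (h1r) the function $h(r,\cdot)$ is $H_0(r)$-Lipschitz, so, by the representation of the generalized gradient recalled in Section~\ref{s:prelim}, $\partial h(r,t)\subseteq[-H_0(r),H_0(r)]$ for every $t$; together with \eqref{f:EL1} this yields $|p'(\rho)|\le\rho^{N-1}H_0(\rho)$ for a.e.\ $\rho$. Hence $|p(r)|\le\int_0^r\rho^{N-1}H_0(\rho)\,d\rho$ for every $r\in\,]0,R]$, and dividing by $r^{N-1}$ and recalling the definition of $M_0$ in (hgr) gives $r^{1-N}|p(r)|\le M_0$, which is (i).

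For the bound $|\rad'(r)|\le\sigma(r)$ in (ii), fix $r$ at which both \eqref{f:EL1} and \eqref{f:EL2} hold and set $q:=r^{1-N}p(r)$, $s:=|\rad'(r)|$; by part (i), $|q|\le M_0$. Since $z\mapsto g(r,|z|)$ is convex and even it is non-decreasing on $[0,+\infty[$, so the inclusion $q\in\partial g(r,s)$ forces $q\ge0$ when $s>0$ (if $s=0$ there is nothing to prove, as $\sigma(r)\ge0$); thus $q\in[0,M_0]$. By the subgradient duality $q\in\partial g(r,s)\Leftrightarrow s\in\partial g^*(r,q)$ we get $s\in\partial g^*(r,q)$; since $M_0<M$ and $]{-}M,M[\subseteq\Dom g^*(r,\cdot)$ by \eqref{f:domgs}, the point $M_0$ is interior to $\Dom g^*(r,\cdot)$ and $g^*(r,\cdot)$ has finite right derivative there. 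Monotonicity of the right derivative then gives $s\le g_+^{*'}(r,q)\le g_+^{*'}(r,M_0)=\sigma(r)$, i.e.\ \eqref{f:apriori}.

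For the bound $\sigma(r)\le\sigma_0$, note that from $\sigma(r)=g_+^{*'}(r,M_0)\in\partial g^*(r,M_0)$ and duality we get $M_0\in\partial g(r,\sigma(r))$; the subgradient inequality for $z\mapsto g(r,|z|)$ evaluated at $z=0$ gives $g(r,0)\ge g(r,\sigma(r))-M_0\,\sigma(r)$, while (g2r) gives $g(r,\sigma(r))\ge g(r,0)+\psi(\sigma(r))$, so $\psi(\sigma(r))\le M_0\,\sigma(r)$. Suppose, for contradiction, $\sigma(r)>\sigma_0$; since $\sigma_0>s_0:=\max\{\psi=0\}$ for $M_0\in\,]0,M[$ (by the normalization of $\psi$ in the Remark following Theorem~\ref{t:noncoercive}), both $\sigma(r)$ and $\sigma_0$ lie in $]s_0,+\infty[$, on which $t\mapsto\psi(t)/t$ is strictly increasing; hence $\psi(\sigma(r))/\sigma(r)>\psi(\sigma_0)/\sigma_0=M_0$ by \eqref{f:sigma0}, contradicting $\psi(\sigma(r))\le M_0\,\sigma(r)$. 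Therefore $\sigma(r)\le\sigma_0$, and combining with the previous step, $|\rad'(r)|\le\sigma(r)\le\sigma_0$ for a.e.\ $r$, which is \eqref{f:lip}; in particular $\rad$ is Lipschitz continuous.

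I expect the only genuinely delicate point to be the sign analysis in the second step: one must exploit convexity \emph{and} evenness of $g(r,\cdot)$ to confine $q$ to $[0,M_0]$ rather than merely to $[-M_0,M_0]$, and one must use \eqref{f:domgs}, i.e.\ that $M_0<M$, to stay strictly inside the effective domain of $g^*(r,\cdot)$, so that $\sigma(r)=g_+^{*'}(r,M_0)$ is finite and the chain of inequalities closes. The integration in (i) and the monotonicity manipulations are routine.
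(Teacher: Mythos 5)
Your proof is correct and follows essentially the same route as the paper: integrate $|p'(\rho)|\le \rho^{N-1}H_0(\rho)$ for (i), pass to $\partial g^*(r,\cdot)$ and use monotonicity of $g_+^{*'}(r,\cdot)$ for \eqref{f:apriori}, and compare with $\sigma_0$ via convexity and (g2r) for \eqref{f:lip}. The only (harmless) deviations are your superfluous sign analysis of $r^{1-N}p(r)$ (the chain $s\le g_+^{*'}(r,q)\le g_+^{*'}(r,M_0)$ needs only $q\le M_0$) and your use of the subgradient inequality at $0$ in the last step, where the paper instead compares $g'_-(r,\sigma_0)\ge M_0\ge g'_-(r,\sigma(r))$; your variant is in fact slightly more airtight in the degenerate case where $g(r,\cdot)$ is affine with slope $M_0$ past $\sigma_0$.
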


\begin{proof}
(i) 
From Remark~\ref{r:H0},
the quantity $M_0$ defined in (hgr) is finite.
By (h1r) we have that $\partial h(r,t) \subseteq [{-}H_0(r),H_0(r)]$
for a.e.\ $r\in [0,R]$, so that
\eqref{f:EL1} gives the estimate
\[
|p'(r)| \leq r^{N-1} H_0(r) 
\qquad
\text{for a.e.}\ r\in [0,R],
\]
and hence
\begin{equation}
\label{f:pH}
\sup_{r\in ]0,R]} r^{1-N} |p(r)|
\leq
\sup_{r\in ]0,R]} r^{1-N} \int_0^r \rho^{N-1} H_0(\rho)\, d\rho = M_0.
\end{equation}

(ii) From \eqref{f:EL2} we have that $|u'(r)| \in \partial g^{*}(r, 
r^{1-N}p(r))$, and, from \eqref{f:pH}, we deduce that
\[
|\rad'(r)| \leq g_+^{*'}(r, r^{1-N}p(r))
\leq g_+^{*'}(r, M_0)
\qquad
\text{for a.e.}\ r\in [0,R],
\]
so that \eqref{f:apriori} holds.
Moreover, if $\sigma_0$ is defined by \eqref{f:sigma0},
then, by the convexity assumption on $g(r,\cdot)$, we obtain the estimate
\[
g'_-(r, \sigma_0) \geq M_0
\qquad
\text{for a.e.}\ r \in [0,R]
\]
(with the convention $g'_-(r, \sigma_0) = +\infty$ if
$\sigma_0\not\in\Dom g(r, \cdot)$).
On the other hand, by the very definition of $\sigma(r)$, we have that
$M_0\in \partial g (r, \sigma(r))$, hence 
\[
g'_-(r, \sigma_0) \geq M_0 \geq g'_-(r, \sigma(r))
\qquad
\text{for a.e.}\ r \in [0,R]\,,
\] 
which in turn implies that 
$
\sigma(r) \leq \sigma_0
$
for a.e.\ $r\in [0,R]$,
and \eqref{f:lip} follows.
\end{proof}

The proof of Theorem~\ref{t:noncoercive} is divided into two steps: first
we show that the result is valid in the superlinear case, i.e.\ when $M=+\infty$, 
and then we obtain the result  when $M<+\infty$ by constructing, with the help 
of 
the \textit{a-priori} estimates obtained by the Euler--Lagrange conditions, a family
of superlinear functional whose radially symmetric minimizers also minimize  
the functional $F$.

\begin{proof}[Proof of Theorem~\ref{t:noncoercive}]
\textsl{Step~1: superlinear Lagrangians.}

(i) In order to use a standard existence result for coercive functionals
(see, e.g., \cite[Theorem~2.2]{EkTem}),
we need to rewrite the functional $F$ in a suitable form.

Let us define
\begin{gather*}
P(r) := \int_0^r \rho^{N-1} H_0(\rho)\, d\rho,
\qquad 
G(r,s) := 
g(r,s) + r^{1-N} P(r) \, s\,,
\\
H(r,t) := h(r,t) - h(r,0) + H_0(r) |t|
=  h(r,t) - h(r,0) + r^{1-N} P'(r) |t|. 
\end{gather*}
Since, by (h1r), it holds that
\[
h(r,t) \geq h(r,0) - H_0(r) |t|,
\qquad
\forall r\in [0,R],\
t\in\R,
\]
then $H(r,t)\geq 0$ for all $r\in [0,R]$ and 
$t\in\R$.
Moreover, we have that
\[
\begin{split}
\Frad(u) = {} &
\int_0^R r^{N-1}\left[
g(r, |u'|) + h(r,u) - h(r,0) + H_0(r) |u|
\right]\, dr
\\ & 
- \int_0^R P'(r)\, |u|\, dr
+ \int_0^R r^{N-1} h(r,0)\, dr.
\end{split}
\]
Since $(|u|, P) \in \Wbase\times\Wbasestar$, it holds that
\[
\int_0^R P'(r)\, |u|\, dr = -\int_0^R P(r) |u|'\, dr =
-\int_0^R P(r) |u'|\, dr
\]
(see, e.g., the derivation of formula (13) in \cite{C4}).
Setting $C := \int_0^R r^{N-1} h(r,0)\, dr$, we get
\[
\Frad(u) = \int_0^R r^{N-1} \left[
G(r, |u'|) + H(r, u)
\right]\, dr + C.
\]

Observe that, by (g2r),
\[
G(r, s) + H(r, t) \geq G(r,s) \geq \psi(s) -M_0\, s + g(r,0).
\]
Since $\psi$ is a Nagumo function,
then by Theorem~2.2 in \cite{EkTem} the functional
\[
\widehat{F}(u) := \int_{B_R} [G(|x|, |\nabla u|) + H(|x|, u)] \, dx
\]
admits a minimizer %$\hat{u}$ 
in $W^{1,1}_0(B_R)$.
Hence, by Corollary~\ref{c:radial}, the functional $\Frad$ admits a
minimizer in $\Wbase$.

\medskip
(ii)-(iii) Let us prove that, for every minimizer $\rad$ of $F$ in $\Wbase$,
there exists a momemtum $p\in\Wbasestar$
associated with $\rad$. 
(Hence, the Lipschitz continuity of $\rad$ will follow from 
Lemma~\ref{l:EL}.)
Specifically, the conclusion follows
from \cite[Theorem~4.2.2]{Clar},
once we show that all the assumptions are satisfied. 
The Lagrangian 
$L(r, t, s) := r^{N-1} [ g(r, |s|) + h(r, t)]$
is convex %and lower semicontinuous 
with respect to $s$, and
satisfies the \textsl{Basic Hypotheses} 4.1.2 in \cite{Clar}.
Moreover, the Hamiltonian of the problem,
i.e., the Fenchel--Legendre transform of $L$
with respect to the last variable:
\[
H(r,t,p) := \sup_{s\in\R} [p\, s - L(r,t,s)]
= r^{N-1} [g^*(r, r^{1-N} p) - h(r,t)],
\quad
\forall (r,t,p) \in ]0, R]\times\R\times\R,
\]
satisfies the \textsl{strong Lipschitz condition} near every arc,
since, by (h1r),
\[
|H(r, t, p) - H(r, \tau, p)| 
= r^{N-1} |h(r,t) - h(r, \tau)|
\leq r^{N-1} H_0(r)\, |t - \tau|.
\]
Finally, the minimization problem is \textsl{calm}, since it is a
free-endpoint problem,
hence all assumptions of Theorem~4.2.2 in \cite{Clar}
are satisfied.

\medskip
\textsl{Step~2: slow growth Lagrangians.}

(i) Let $\sigma_0>0$ be defined by \eqref{f:sigma0}, and, for $a > \sigma_0$ 
given,
let $\Phi_a$ be the class of all convex superlinear non-decreasing functions
$\varphi\colon [0,+\infty[\to [0, +\infty[$,
such that $\varphi(s) = 0$ for every $s\in [0,a]$.

Given $\lambda \geq 0$ and $\varphi\in\Phi_a$, let us define the superlinear
Lagrangian
\[
\glambda(r, s) := g(r,|s|) + \lambda\, \varphi(|s|)
\]
and the corresponding functional
\begin{equation}
\begin{split}
\Flambda(u)  := {} & \int_0^R r^{N-1} [\glambda(r, |u'|) + h(r,u)]\, dr
\\ = {} &
\Frad(u) + \lambda \int_0^R r^{N-1} \varphi(|u'(r)|)\, dr\,,
\qquad u\in\Wbase\,.
\end{split}
\label{f:Fl}
\end{equation}
For every $\lambda > 0$ and $\varphi\in\Phi_a$
the functional $\Flambda$ satisfies the assumptions of Step~1,
hence there exist a minimizer
$\ulambda$ of $\Flambda$ in $\Wbase$ and an associated momentum
$\plambda\in\Wbasestar$, such that
\[
\begin{split}
\plambda'(r) & \in r^{N-1} \partial h(r, \ulambda(r)),
\qquad\text{for a.e.}\ r\in [0,R],
\\
\plambda(r) & \in r^{N-1} \partial \glambda(r, |\ulambda'(r)|), 
\qquad\text{for a.e.}\ r\in [0,R].
\end{split}
\]
By Lemma \ref{l:EL}(i), we obtain that $r^{1-N}|\plambda(r)| \leq M_0$
for every $r\in ]0,R]$. On the other hand, since
\[
r^{1-N}\plambda(r)  \in  \left[(\glambda)_-'(r, |\ulambda'(r)|), 
(\glambda)_+'(r, |\ulambda'(r)|)\right]
\]
and, by Lemma \ref{l:EL}(ii),  $M_0\in \partial g(r, \sigma(r))$ with
$\sigma(r)\leq \sigma_0<a$, we obtain that 
\[
g_-'(r, |\ulambda'(r)|)\leq
(\glambda)_-'(r, |\ulambda'(r)|)\leq M_0 \leq g_+'(r, \sigma(r)) \leq
g_-'(r, a).
\]
Hence, $|\ulambda'|\leq a$ a.e.\ in $[0,R]$, so that $\varphi(|\ulambda'|)=0$,
and $\Flambda(\ulambda) = F(\ulambda)$.

By the discussion above,
for every $\varphi \in \Phi_a$ and every $\mu\geq\lambda > 0$,
we have that
\[
\Flambda(\ulambda[\mu]) \geq
\Flambda(\ulambda) =
\Flambda[\mu](\ulambda) 
\geq
\Flambda[\mu](\ulambda[\mu])
\geq
\Flambda(\ulambda[\mu]),
\]
hence we conclude that $m := F(\ulambda)$ is independent of $\lambda > 0$ and $\varphi\in\Phi_a$.

We claim that $m = \min_{\Wbase} F$.
Specifically,
assume by contradiction that there exists $v\in\Wbase$ such that
$\Frad(v) < m$.
Since $|\nabla v(|x|)| \in L^1(B_R)$,
by the de La Vallée Poussin criterion (see, e.g. \cite[Theorem 10.3.i]{Ces}), 
there exists a
function $\varphi\in\Phi_a$ such that 
$\int_{B_R} \varphi(|\nabla v(|x|)|)\, dx < +\infty$, i.e.
\[
\int_0^R r^{N-1} \varphi(|v'(r)|)\, dr < +\infty.
\]
By \eqref{f:Fl}, for $\lambda > 0$ small enough we have that
$\Flambda(v) < m = \min \Flambda$,
a contradiction.

\medskip
(ii) Let $\rad$ be a minimizer of $F$ in $\Wbase$,
and let us prove that $\rad$ is Lipschitz continuous.

Assume by contradiction that $\rad$ is not Lipschitz continuous,
i.e.  
$\mathcal{L}(\{|\rad'| > a\}) > 0$ for every $a > 0$ (here
$\mathcal{L}$ denotes the Lebesgue measure on $\R$).

Let us define $\delta$, $\widehat{\sigma}$ and $\sigma_1$ by:
\[
\delta := \frac{M - M_0}{3}\,,
\qquad
\widehat{\sigma}(r) := g_-^{*'}(r, M_0 + \delta),
\qquad
\frac{\psi(\sigma_1)}{\sigma_1} = M_0 + 2\delta.
\]
Observe that, by (g2r),
\[
g'_-(r, \sigma_1) \geq \frac{\psi(\sigma_1)}{\sigma_1} =  M_0 + 2\delta > M_0 + 
\delta
\geq g'_-(r, \widehat{\sigma}(r)),
\]
so that $\sigma_1 \geq \widehat{\sigma}(r)$ for every $r\in [0,R]$.
(The inequality is trivially satisfied for those values of $r$
such that $\sigma_1\not\in \Dom g(r, \cdot)$.)
Let us define the function
\[
\ell(r,s) := g(r, \widehat{\sigma}(r)) + (M_0 + \delta) (s - 
\widehat{\sigma}(r)),
\qquad r\in [0,R],
\ s\geq 0.
\]
Since $M_0+\delta\in\partial g(r, \widehat{\sigma}(r))$, we have that
$g(r,s) \geq l(r,s)$ for every $r\in[0,R]$ and $s\geq 0$.

Let $\varphi$ be a Nagumo function
such that $\int_0^R r^{N-1} \varphi(|\rad'|)\, dr < +\infty$.
Given $a > 0$, let $\varphi_a := [(\varphi - \varphi(a)) \vee 0] \in \Phi_a$.
Since $0\leq\varphi_a\leq\varphi$,
we have that
\[
0\leq \lim_{a\to +\infty}\int_{\{|\rad'| > a\}} r^{N-1} \varphi_a(|\rad'|)\, dr
\leq \lim_{a\to +\infty}
\int_{\{|\rad'| > a\}} r^{N-1} \varphi(|\rad'|)\, dr
= 0,
\]
whereas
\[
\lim_{a\to +\infty} \int_{\{\sigma_1 \leq |\rad'| \leq a\}} 
r^{N-1} (|\rad'| - \sigma_1)\, dr =
\int_{\{\sigma_1 \leq |\rad'| \}} 
r^{N-1} (|\rad'| - \sigma_1)\, dr >0\,,
\]
hence there exists $\zeta > \sigma_1$ such that
\begin{equation}
\label{f:diseq}
\delta
\int_{\{\sigma_1 \leq |\rad'| \leq \zeta\}} 
r^{N-1} (|\rad'| - \sigma_1)\, dr
>
\int_{\{|\rad'| > \zeta\}} r^{N-1} \varphi_\zeta(|\rad'|)\, dr\,.
\end{equation}

\begin{figure}
\includegraphics{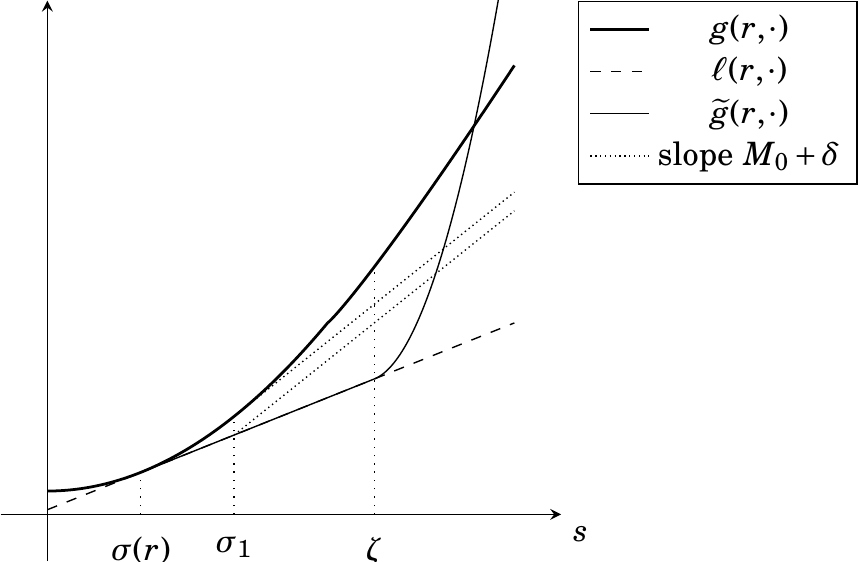}
\caption{Construction of $\gtilde$}
\label{fig1}
\end{figure}

For every $r\in[0,R]$, let us define the function
(see Figure~\ref{fig1})
\[
\gtilde (r,s) :=
\begin{cases}
g(r,s), &\text{if}\ 0\leq s \leq \widehat{\sigma}(r),
\\
\ell(r,s) + \varphi_\zeta(s),
&\text{if}\ s\geq \widehat{\sigma}(r), 
\end{cases}
\]
and let
\[
\Ftilde(v) := \int_0^R r^{N-1} [\gtilde(r, |v'|) + h(r,v)]\, dr\, ,
\qquad v\in \Wbase.
\]

Since $g'_-(r, \sigma_1) \geq M_0+2\delta$, 
for every $s\in [\sigma_1, \zeta]$
we have that
\begin{equation}\label{f:estig}
\begin{split}
g(r, s) & \geq g(r, \sigma_1) + (M_0+2\delta) (s-\sigma_1)
\\ & \geq \gtilde(r, \sigma_1) 
+ (M_0+\delta) (s-\sigma_1)
+ \delta (s-\sigma_1)
\\ & = \gtilde(r, s) 
+ \delta (s-\sigma_1).
\end{split}
\end{equation}
Observe that, by the definition of $\gtilde$
and \eqref{f:estig},
\begin{align*}
&\gtilde(r, |\rad'|) \leq g(r, |\rad'|),
& &\text{a.e.\ in $\{|\rad'| < \sigma_1\}$},
\\
&\gtilde(r, |\rad'|) \leq g(r, |\rad'|) - \delta(|\rad'| - \sigma_1),
& &\text{a.e.\ in $\{\sigma_1 \leq |\rad'| \leq  \zeta\}$},
\\
&\gtilde(r, |\rad'|) \leq g(r, |\rad'|) + \varphi_\zeta(|\rad'|),
& &\text{a.e.\ in $\{|\rad'| > \zeta\}$},
\end{align*}
hence, by \eqref{f:diseq},
\[
\Ftilde(\rad) \leq \Frad(\rad)
- \delta
\int_{\{\sigma_1 \leq |\rad'| \leq \zeta\}} 
r^{N-1} (|\rad'| - \sigma_1)\, dr
+
\int_{\{|\rad'| > \zeta\}} 
r^{N-1} \varphi_\zeta(|\rad'|)\, dr
< \Frad(\rad)\,.
\]
On the other hand, if $\widetilde{u}$  is a minimizer of $\Ftilde$,
then by Step~1 there exists $p\in\Wbasestar$ such that 
$(\widetilde{u}, p)$ satisfies the Euler--Lagrange inclusions~\eqref{f:EL1}--\eqref{f:EL2} with $g$ replaced by $\gtilde$.
From Lemma~\ref{l:EL}(i)
we deduce that 
\[
|\widetilde{u}'(r)| \leq
\gtilde_+^{*'}(r, r^{1-N} p(r))
\leq
\gtilde_-^{*'}(r, M_0+\delta)
\leq \widehat{\sigma}(r),
\qquad
\text{for a.e.}\ r\in [0,R],
\] 
(where the last inequality follows from
$\gtilde'(r, \widehat{\sigma}(r)) = M_0+\delta$),
hence
\[
\gtilde(r, |\widetilde{u}'|)
= g(r, |\widetilde{u}'|),
\qquad
\text{for a.e.}\ r\in [0,R],
\]
and, in conclusion,
\[
\Frad(\widetilde{u}) = \Ftilde(\widetilde{u})
\leq \Ftilde(\rad) < \Frad(\rad),
\]
in contradiction with the assumption that $\rad$ is a minimizer of $F$.

\medskip
(iii) Finally, let us prove that $\rad$ satisfies the
Euler--Lagrange inclusions.
Let $\sigma > 0$ be
such that $|\rad'| \leq \sigma$ a.e.\ in $[0,R]$.
Reasoning as in the existence proof, 
$\rad$ is a minimizer of $\Flambda$
for every $\lambda> 0$ and $\varphi\in \Phi_a$, with $a > \sigma\vee\sigma_0$.
Hence, $\rad$ satisfies the Euler--Lagrange inclusions with
$\glambda$ instead of $g$. 
Since $\partial\glambda(r, |\rad'|) = \partial g(r, |\rad'|)$
for a.e.\ $r\in [0,R]$, the conclusion follows.
\end{proof}

\section{Convex solutions of variational problems with gradient constraints}
\label{s:convex}

As an application of the previous results, we obtain the existence of convex
radially symmetric minimizers for autonomous functionals of the form
\begin{equation}\label{f:auton}
F(u) := \int_{B_R} \left[
g(|\nabla u|) + h(u)
\right]\,dx,
\end{equation}
in the space
\[
\W := \left\{
u \in W^{1,1}_0(\Omega):\
|\nabla u(x)| \leq \mu(|x|)\ \text{for a.e.}\ x\in B_R
\right\}
\]
of Sobolev functions with gradient constraint given by a monotone
non-decreasing function
$\mu\colon [0,R] \to ]0,+\infty]$.

\begin{theorem}\label{t:convex}
Let us consider the integral functional \eqref{f:auton},
where $g\colon [0,+\infty[\to\R$
and $h\colon\R\to\R$
satisfy the following
assumptions:
\begin{itemize}
\item[(g1)]
$\R\ni z\mapsto g(|z|)$ is a convex function;
\item[(g2)] $M := \lim_{s\to +\infty} g(s) / s > 0$;
\item[(h1)] 
$h$ is a convex function;
\item[(hg)] $\min\{ |h'_-(0)|, |h'_+(0)|\} < \frac{N M}{R}$.
\end{itemize}
Then the following hold.
\begin{itemize}
\item[(i)] 
$F$ admits a radially symmetric minimizer $u(x) = \rad(|x|)$ in $\W$.
\item[(ii)] There exists a momentum $p\in\Wbasestar$ such that
the following Euler--Lagrange inclusions hold:
\begin{gather}
p'(r) \in r^{N-1} \partial h(\rad(r)),
\qquad\text{for a.e.}\ r\in [0,R],
\label{f:EL1t}\\
p(r) \in r^{N-1} \Gamma(r, |\rad'(r)|)
\qquad\text{for a.e.}\ r\in [0,R],
\label{f:EL2t}
\end{gather}
where
\[
\Gamma(r, s) :=
\begin{cases}
\partial g(s), &\text{if}\ 0\leq s < \mu(r),\\
[\mu(r), +\infty[, &\text{if}\ s = \mu(r),\\
\emptyset, &\text{if}\ s > \mu(r).
\end{cases}
\]

\item[(iii)]
If $h'_+(0) \geq 0$ [resp.\ $h'_-(0) \leq 0$],
then $u$ is a convex [resp.\ concave] function.
\item[(iv)]
If, in addition, 
$g$ has a strict minimum point at $0$,
or $h$ is a strictly monotone function,
then every minimizer of $F$ in $\W$
is radially symmetric. 
\end{itemize}
\end{theorem}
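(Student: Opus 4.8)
The plan is to deduce Theorem~\ref{t:convex} from Theorem~\ref{t:noncoercive} by viewing the gradient constraint as an infinite-valued Lagrangian in the gradient variable, and then to extract the convexity of the radial profile from the Euler--Lagrange inclusions. First I would set
\[
\widetilde g(r,s) := \begin{cases} g(s), & 0\le s\le \mu(r),\\ +\infty, & s>\mu(r),\end{cases}
\]
so that minimizing $F$ over $\W$ is the same as minimizing the (unconstrained) functional with Lagrangian $\widetilde g(r,|\nabla u|)+h(u)$ over $W^{1,1}_0(B_R)$; by Corollary~\ref{c:radial} this reduces to the one-dimensional problem for $\widetilde\Frad$ over $\Wbase$. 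I would check that $\widetilde g$ satisfies (g1r): it is a normal integrand (for each $r$ the map $s\mapsto \widetilde g(r,|s|)$ is lsc convex, since $g$ is convex and $]{-}\mu(r),\mu(r)[$ is an interval, and joint measurability in $(r,s)$ follows from monotonicity of $\mu$), $z\mapsto\widetilde g(r,|z|)$ is convex, and $r^{N-1}\widetilde g(r,0)=r^{N-1}g(0)\in L^1$. For (g2r), note $\widetilde g(r,s)-\widetilde g(r,0)\ge g(s)-g(0)=:\psi(s)$ where by (g1)--(g2) $\psi$ is convex with $\liminf_{s\to\infty}\psi(s)/s=M>0$. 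Assumption (h1r) holds with $H_0(r)\equiv L$, where $L:=\max\{|h'_-(0)|,|h'_+(0)|\}$ is the Lipschitz constant of the convex function $h$ near $0$ — but $h$ itself need not be globally Lipschitz, so the first real point is to arrange global Lipschitzness.

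The key trick for the Lipschitz issue is that only the behaviour of $h$ on the range of admissible $u$ matters, and that range is \emph{a priori bounded}: every $u\in\W$ satisfies $|u(x)|\le \int_{|x|}^R\mu(\rho)\,d\rho\le R\,\mu(R)$ when $\mu(R)<\infty$, and in general $\|\rho^{N-1}u'\|_{L^1}\le \int_0^R\rho^{N-1}\mu(\rho)\,d\rho<\infty$ gives $r^{N-1}|u(r)|\le\int_0^R\rho^{N-1}\mu(\rho)\,d\rho$, but for the cleaner bound I would truncate: replace $h$ by $\bar h$ which equals $h$ on $[-\Lambda,\Lambda]$ (with $\Lambda$ the uniform $L^\infty$ bound for the constrained profiles, available because $\mu$ is locally finite away from $0$ and we can first establish the bound) and is extended affinely with slopes $h'_-(-\Lambda)$, $h'_+(\Lambda)$ outside; then $\bar h$ is convex, globally Lipschitz, and $F$ and $\bar F$ agree on $\W$. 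Actually the cleanest route, and the one I expect the paper takes, is subtler: the compatibility condition (hg) is weaker than $L<NM/R$ — it only asks that $\min\{|h'_-(0)|,|h'_+(0)|\}<NM/R$. So I would choose the sign: if $h'_+(0)\ge0$ then near $0$ the relevant one-sided slope is small, and I would replace $h$ by its ``lower truncation'' $\bar h(t):=\sup\{h'(0^+)(t-t_0)+h(t_0)\}$ type construction making $\bar h$ coincide with $h$ where it matters for minimizers while having Lipschitz constant $H_0\equiv h'_+(0^+)$ (resp.\ $|h'_-(0)|$); then (hgr) becomes $M_0=\sup_r r^{1-N}\int_0^r\rho^{N-1}H_0\,d\rho = H_0\cdot\sup_r r^{1-N}\cdot\frac{r^N}{N}=\frac{H_0 R}{N}<M$, which is exactly (hg). This is the step I expect to be the main obstacle: carefully choosing the modification of $h$ (using convexity and the sign hypothesis in (iii), or arguing separately in the two cases of (hg)) so that (i) it does not change the minimization over $\W$, and (ii) the resulting $H_0$ makes (hgr) follow from (hg).

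Granting that, parts (i) and (ii) follow directly: Theorem~\ref{t:noncoercive} applied to $\widetilde g$ and the modified $\bar h$ yields a radially symmetric Lipschitz minimizer $u(x)=\rad(|x|)$ and a momentum $p\in\Wbasestar$ with $p'(r)\in r^{N-1}\partial\bar h(\rad(r))=r^{N-1}\partial h(\rad(r))$ (the last equality because the modification does not touch $h$ on the range of $\rad$) and $p(r)\in r^{N-1}\partial\widetilde g(r,|\rad'(r)|)$; one then identifies $\partial\widetilde g(r,s)$ with $\Gamma(r,s)$ — namely $\partial g(s)$ for $s<\mu(r)$, $[g'_-(\mu(r)),+\infty[\supseteq[\mu(r),+\infty[$ hmm, one should check $\Gamma$ uses $[\mu(r),+\infty[$; this matches because on the constraint $s=\mu(r)$ the subdifferential of the indicator plus $g$ is $[g'_-(\mu(r)),\infty[$ and since $g$ has slope at most... actually $\Gamma(r,\mu(r))=[\mu(r),+\infty[$ is what the normal cone contributes, and one shows $g'_-(\mu(r))\le \mu(r)$ is not needed — rather the statement should be read as the reachable set, so I would just verify $\partial\widetilde g(r,\cdot)=\Gamma(r,\cdot)$ as claimed by a direct subdifferential computation of a convex function plus an indicator. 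For (iii): assume $h'_+(0)\ge0$; by convexity $\partial h(t)\subset[0,\infty)$ for $t\ge0$ and $\partial h(t)\subset(-\infty,0]$ for $t\le0$, wait that's backwards — $h'\ge0$ on $[0,\infty)$ means... I want to show $\rad$ is non-increasing and convex, i.e.\ $u$ is convex. Since $\rad(R)=0$ and (one shows) $\rad\ge0$, we have $\rad(r)\ge0$, so $\partial h(\rad(r))\subset[h'_+(0),\infty)\subset[0,\infty)$, hence $p'(r)\ge0$, so $p$ is non-decreasing with $p(0)=0$, hence $p\ge0$; then $p(r)\in r^{N-1}\Gamma(r,|\rad'|)$ forces $|\rad'(r)|=\overline g^{*\prime}(r,r^{1-N}p(r))$ roughly, and since $r\mapsto r^{1-N}p(r)$ can be shown monotone (using $p$ non-decreasing together with... this is the delicate monotonicity argument, analogous to the one in Lemma~\ref{l:EL}), $|\rad'|$ is non-decreasing; combined with $\rad'\le0$ (from $p\ge0$ and $\Gamma\subset[0,\infty)$, giving $\rad'(r)=-|\rad'(r)|$... one needs the sign of $\rad'$, which comes from $\rad\ge0=\rad(R)$) we get $\rad'$ non-increasing, i.e.\ $\rad$ concave as a function of $r$, which translates to $u$ convex as a radial function on $B_R$ (a radial function $u(x)=\rad(|x|)$ is convex iff $\rad$ is convex and non-increasing — so I must double check: concave $\rad$?). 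I would sort the sign bookkeeping carefully here. Finally (iv): if $g$ has a strict minimum at $0$, then $s\mapsto \widetilde g(r,|s|)$, while only convex, is \emph{strictly} increasing in $s$ on $[0,\mu(r)]$, hence $s\mapsto g(s)+h(t)$ is strictly increasing in $s$; likewise if $h$ is strictly monotone then $s\mapsto f(r,t,s)$ inherits strict monotonicity in the relevant sense — in either case apply the second part of Theorem~\ref{t:radial} to conclude every minimizer is radially symmetric.

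\begin{remark}
The sign case-analysis for (hg) (whether $|h'_-(0)|$ or $|h'_+(0)|$ is the smaller quantity) is handled by replacing $h$ with a one-sided affine modification that leaves the minimization over $\W$ unchanged but reduces its Lipschitz constant to the smaller of the two one-sided slopes, so that $M_0=H_0R/N<M$ matches (hg) exactly; the monotonicity of $r\mapsto r^{1-N}p(r)$ needed for the convexity conclusion in (iii) is obtained by the same subgradient-monotonicity argument used in Lemma~\ref{l:EL}(ii), now run in the opposite direction.
\end{remark}
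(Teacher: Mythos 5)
Your overall strategy for (i), (ii) and (iv) --- absorbing the gradient constraint into the Lagrangian $\gtilde(r,s)=g(s)+\one_{[0,\mu(r)]}(s)$, replacing $h$ by a convex, globally Lipschitz surrogate whose Lipschitz constant is the \emph{smaller} one-sided slope at $0$ so that (hgr) reduces to (hg) via $M_0=H_0R/N$, and then invoking Theorem~\ref{t:noncoercive} and Theorem~\ref{t:radial} --- is exactly the paper's. But both steps you yourself flag as delicate are genuinely incomplete. On the modification of $h$: your bounded-range truncation cannot work in general, because $\mu$ is allowed to take the value $+\infty$ (e.g.\ $\mu\equiv+\infty$ gives $\W=W^{1,1}_0(B_R)$), so functions in $\W$ have no uniform $L^\infty$ bound; and your ``lower truncation'' is left undefined with the slopes garbled (in the case $h'_+(0)\ge 0$ with $0$ not a minimum of $h$, one has $0<h'_-(0)\le h'_+(0)$ and the surrogate must have Lipschitz constant $h'_-(0)$, not $h'_+(0)$). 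The paper's construction is: with $m:=\max\argmin h$ (or $-\infty$), set $\htilde:=h(m)$ on $]{-}\infty,m]$, $\htilde:=h$ on $]m,0]$, and $\htilde(t):=h(0)+h'_-(0)\,t$ for $t>0$; this $\htilde$ is convex, non-decreasing and $h'_-(0)$-Lipschitz, and the transfer back to the original problem is \emph{not} an identity of the two minimization problems but goes through the truncation $u_m:=(u\wedge 0)\vee m$, which one shows minimizes both the modified and the original functional.

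The more serious gaps are in (iii). (a) You need a \emph{monotone non-decreasing} minimizer $\rad$ with $m\le\rad\le 0$; Theorem~\ref{t:noncoercive} only provides a Lipschitz one, and the paper obtains monotonicity by a rearrangement based on Riesz's Rising Sun Lemma (replacing $\widehat u_m$ by constants on the intervals where it exceeds its future infimum). Without this, the map $r\mapsto h'(\rad(r))$ need not be monotone and the convexity argument collapses; note also that the correct equivalence is ``$u$ convex iff $\rad$ convex and non-decreasing'', not non-increasing. (b) The monotonicity of $r^{1-N}p(r)$ does \emph{not} follow from $p\ge 0$ and $p$ non-decreasing (take $p(r)=\sqrt r$ with $N=2$), and there is no such argument hiding in Lemma~\ref{l:EL}. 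The paper proves it by a separate computation: for $\delta\in\,]0,1]$ one writes $[r^{1-N-\delta}p(r)]'=r^{-N-\delta}\lambda(r)$ with $\lambda(r)=r\,p'(r)-(N-1+\delta)p(r)$, checks $\lambda(0)=0$ and $\lambda'(r)\ge(1-\delta)p'(r)\ge 0$ using $p'(r)=r^{N-1}h'(\rad(r))$ together with the monotonicity of $h'(\rad(\cdot))$, and concludes that $r^{1-N}p(r)=r^\delta\,[r^{1-N-\delta}p(r)]$ is strictly increasing on $]r_0,R]$ (after first reducing to $\htilde\in C^2$). This computation is the core of (iii) and is absent from your proposal.
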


\begin{proof}
The constraint $|\nabla u(x)|\leq\mu(|x|)$ in the definition of the
functional space $\W$ can be incorporated into the Lagrangian.
Specifically, let us define
\[
\gtilde(r,s) := g(s) + \one_{[0, \mu(r)]}(s),
\qquad
\widetilde{F}(u) := \int_{B_R} [\gtilde(|x|, |\nabla u(x)|) + h(u(x))]\, dx,
\] 
where $\one_B$ is the indicator function of a set $B$, defined by
$\one_B(s) = 0$ if $s\in B$ and $+\infty$ otherwise.
Then minimizing $F$ in $\W$ is equivalent to minimizing
$\widetilde{F}$ in $W^{1,1}_0(B_R)$.

We remark that, if $g$ satisfies (g1)--(g2), then
$\gtilde$ satisfies (g1r)--(g2r)
and
\begin{equation}\label{f:gtsub}
\partial\gtilde(r, s) = \Gamma(r, |s|),
\qquad \forall (r,s)\in [0,R]\times\R.
\end{equation}

\medskip
We shall prove the theorem only in the case $h'_+(0) \geq 0$ 
(since the case $h'_-(0) \leq 0$ can be handled similarly).

If $0$ is a minimum point of $h$, then clearly
parts (i)-(ii)-(iii) are satisfied choosing $u \equiv 0$
and $p\equiv 0$.
Hence, it is not restrictive to prove (i)-(ii)-(iii) under
the additional assumption that $0$ is not a
minimum point of $h$.
Since $h'_+(0) \geq 0$,
and $h$ is a convex function,
we have that $h'_+(0)\geq h'_-(0) > 0$.

Since $h'_-(0) > 0$,
the (possibly empty) convex and closed set $\argmin h$ is 
contained in the open half-line $]{-}\infty, 0[$.
If $\argmin h \neq \emptyset$, let $m := \max \argmin h$,
otherwise let $m = {-}\infty$.
Let us define
\[
\htilde(t) := 
\begin{cases}
h(m), & \text{if}\ t \leq m,\\
h(t), & \text{if}\ m < t \leq 0,\\
h(0) + h'_-(0)\, t,
& \text{if}\ t > 0,
\end{cases}
\]
(the first condition is empty if $m=-\infty$) and
\[
\Fhat(u) := \int_{B_R} [\gtilde(|x|, |\nabla u|) + \htilde(u)]\, dx,
\qquad
u \in W^{1,1}_0(B_R).
\]
Given $v\in W^{1,1}_0(B_R)$, let $v_m := (v\wedge 0) \vee m$,
and observe that
$\Fhat(v_m) \leq \Fhat(v)$.
If $u$ is a minimizer of $\Fhat$, then also $u_m$ is
a minimizer of $\Fhat$; moreover,
\[
\Ftilde(u_m) = \Fhat(u_m) \leq \Fhat(v_m)
= \Ftilde(v_m) \leq \Ftilde(v),
\qquad
\forall v\in W^{1,1}_0(B_R),
\]
so that $u_m$ is a minimizer of $\Ftilde$.

Hence, we have proved the following

\smallskip
\textsl{Claim 1:
If $u$ is a minimizer of $\Fhat$,
then $u_m$ is a minimizer of both $\Fhat$ and $\Ftilde$.}

\medskip
After this preliminary reduction, let us prove (i)--(iv).

\smallskip
 
(i) Thanks to Claim~1 and Theorem~\ref{t:radial}, assertion~(i)
is a consequence of the following 

\smallskip
\textsl{Claim~2:
There exists a Lipschitz continuous, monotone non-decreasing minimizer $\rad$ of
$\Fhatrad$ in $\Wbase$ satisfying $m \leq \rad \leq 0$.}

Specifically, from (hg) we have that
\[
0\leq \htilde'_-(t) \leq \htilde'_+(t) \leq \htilde'_-(0) =: K < \frac{N\, M}{R},
\qquad \forall t\in\R.
\]
Hence, from Theorem~\ref{t:noncoercive}
the functional $\Fhatrad$ admits a 
Lipschitz continuous
minimizer $\widehat{u}\in\Wbase$.

Let us define
\[
S := \left\{
r \in ]0,R[:\
\widehat{u}_m(r) > \inf_{[r, R]} \widehat{u}_m
\right\}\,.
\]
By Riesz's \textsl{Rising sun Lemma}, we have that
$S$ is the union of a finite or countable family
$(a_k, b_k)$, $k\in J$, of pairwise disjoint open intervals,
with $\hat{u}_m(a_k) = \hat{u}_m(b_k)$ for every $k$
(unless $a_k = 0$, in which case $\widehat{u}_m(0) \leq \widehat{u}_m(b_k)$).
Hence, the function
\[
\rad(r) :=
\begin{cases}
\widehat{u}_m(b_k), & \text{if $r\in (a_k, b_k)$ for some $k\in J$},
\\
\widehat{u}_m(r), & \text{otherwise},
\end{cases}
\]
is a Lipschitz continuous, monotone non-decreasing function and
$\Fhatrad(\rad) \leq 
\Fhatrad(\widehat{u})$,
i.e., $\rad$ is a minimizer of $\Fhatrad$
with the required properties, and Claim~2 is proved.

\medskip
(ii) Here and in the following, $\rad$ will denote the minimizer of $\Fhatrad$
constructed in Claim~2.
By Theorem~\ref{t:noncoercive},
there exists a momentum $p\in\Wbasestar$ such that the Euler--Lagrange
inclusions~\eqref{f:EL1t}--\eqref{f:EL2t} are satisfied 
with $h$ replaced by $\htilde$.
Observing that $m\leq \rad \leq 0$, and
\[
\partial \htilde(0) = \{h'_-(0)\} \subseteq \partial h(0),
\qquad
\partial \htilde(m) \subseteq \partial h(m),
\qquad
\partial \htilde (t) = \partial h(t), \quad\forall t\in ]m,0[,
\]
the same pair satisfies also the Euler--Lagrange
inclusions~\eqref{f:EL1t}--\eqref{f:EL2t} 
(with the original $h$).

\medskip
(iii)
Let us first prove the claim under the additional assumption
that $\htilde\in C^2$.
In this case, the inclusion~\eqref{f:EL1t} is, in fact, the equation
\[
p'(r) = r^{N-1} h'(\rad(r)),
\qquad
\text{for a.e.}\ r\in [0, R],
\] 
$p$ is monotone non-decreasing,
and $p'$ is Lipschitz continuous.

Since $\rad$ is monotone non-decreasing,
there exists $r_0\in [0, R[$ such that
$\rad(r) = m$ for every $r\in [0, r_0[$,
and $\rad(r) > m$ for every $r\in ]r_0, R]$.
Hence, to prove that $x \mapsto \rad(|x|)$ is convex
in $B_R$, it is enough to prove that
$\rad'$ is (equivalent to) a non-decreasing function in $[r_0, R]$.

Moreover,
by \eqref{f:EL2t}, the explicit form \eqref{f:gtsub} of
$\partial\gtilde$, and the monotonicity of $\mu$, 
this property will follow once we prove that
$r^{1-N} p(r)$ is strictly increasing in $]r_0, R]$.

For $r \in ]r_0, R]$, we have that $h'_-(\rad(r)) > 0$,
hence $p'(r) > 0$. 
As a consequence, $p$ is strictly positive and 
strictly monotone increasing in $]r_0, R]$.

Let us fix $\delta\in ]0,1]$.
We have that
\begin{equation}
\label{f:pdelta}
[r^{1-N-\delta} p(r)]'
= r^{-N-\delta}[r\, p'(r) - (N-1+\delta) p(r)]
=: r^{-N-\delta} \lambda(r).
\end{equation}
Since $0\leq p'(r) \leq K\, r^{N-1}$,
the function $\lambda(r) := r\, p'(r) - (N-1+\delta) p(r)$ is
absolutely continuous in $[0,R]$ and $\lambda(0) = 0$.
Moreover, since the function $r\mapsto h'(\rad(r))$ is
monotone non-decreasing,
\[
\begin{split}
\lambda'(r) & = [r^N h'(\rad(r)) - (N-1+\delta) p(r)]'
\\ & = N r^{N-1} h'(\rad(r)) + r^N[h'(\rad(r))]' - (N-1+\delta) p'(r)
\geq (1-\delta) p'(r) \geq 0.
\end{split}
\] 
Hence, $\lambda(r) \geq 0$ for every $r$, so that
from \eqref{f:pdelta} we deduce that the function
$r^{1-N-\delta} p(r)$ is monotone non-decreasing.
As a consequence, the function
$r^{1-N} p(r) = r^\delta [r^{1-N-\delta} p(r)]$
is strictly increasing in $[r_0, R]$.

Finally, the assumption $h\in C^2$ can be dropped as in 
\cite[\S4, Step~3]{C3}
(see also \cite{CM2,CM3}).

\medskip
(iv) 
If $0$ is a strict minimum point of $g$, then
$g$ is strictly monotone increasing in $[0,+\infty[$,
and
the result follows from Theorem~\ref{t:radial}.
If $h$ is a strictly monotone function, the proof can be found
in \cite{CePe1994} (step~(c) in the proof of Theorem~1).
\end{proof}

\begin{example}[The case $N=1$]
Let $N=1$, 
let $\mu\colon [0,R]\to ]0,+\infty]$ be a non-decreasing function,
let $g$ satisfy (g1)--(g2),
and let $h\colon\R\to\R$ be a $C^1$ function
satisfying $0 < h'(t) \leq K < M/R$ for every $t\in\R$.
Then every minimizer $u$ of $F$ in $\W$ is convex.
Specifically, let $\rad(x) = u(|x|)$ and let $p\in\Wbasestar$ be
an associated momentum.
From \eqref{f:EL1} we deduce that
$p'(r) = h'(\rad(r)) > 0$ for every $r\in ]0,R]$,
hence $p$ is a strictly increasing function.
Since $\rad' \geq 0$ and $p(r)\in\partial \gtilde(r, \rad'(r))$,
we conclude that $\rad'$ is non-decreasing,
hence $\rad$ is a convex functions.
\end{example}

\begin{example}
We show that, if $N>1$ and $h$ is not convex, then a minimizer
of $F$ need not be convex.
Let $N=2$, $R=2$, $g(s) = s^2 / 2$, $\mu\equiv +\infty$,
$\varepsilon \in ]0, \sqrt{\log 2}]$,
and consider the function
\[
h(u) :=
\begin{cases}
4(u+\varepsilon), &\text{if}\ u \leq - \varepsilon,\\
0, & \text{if}\ u > \varepsilon.
\end{cases}
\] 
We claim that the non-convex function
\[
\rad(r) :=
\begin{cases}
r^2 - 1 - \varepsilon, & \text{if}\ r\in [0,1],\\
\varepsilon \, \frac{\log(r/2)}{\log 2},
&\text{if}\ r\in [1,2],
\end{cases}
\]
is a minimizer of $\Frad$.
Specifically, the family of all solution of the Euler--Lagrange
inclusions~\eqref{f:EL1}--\eqref{f:EL2} is given by
the trivial pair $(0,0)$ and by the pairs
of the form $(\rad_k, p_k)$, with $k\in\R$,
$p_k(r) = r\, \rad_k'(r)$, and
\[
\rad_k(r) :=
\begin{cases}
r^2 - 1 -\varepsilon + k \, \log r,
&\text{if}\ r\in ]0, 1],
\\
\varepsilon \, \frac{\log(r/2)}{\log 2},
&\text{if}\ r\in [1,2]\,,
\end{cases}
\]
so that $\rad = \rad_0$.
A direct computation shows that
$\Frad(0) = 0$, 
$\Frad(\rad_k) = +\infty$ for every $k\neq 0$,
and $\Frad(\rad) = (\varepsilon^2 - \log 2) / (2 \log 2) < 0$,
hence the claim follows.
\end{example}

\medskip
From the analysis above we can prove the following result without
requiring the convexity of $g$.
In the following, $g^{**}$ denotes the bipolar function of
$z \mapsto g(|z|)$.

\begin{corollary}\label{c:nonconvex}
Let us consider the integral functional \eqref{f:auton},
where $g\colon [0,+\infty[\to [0,+\infty[$
satisfies the following
assumptions:
\begin{itemize}
\item[(g0)]
$g$ is a lower semicontinuous proper function, such that $g(0) = g^{**}(0)$;
\item[(g2)] $M := \liminf_{s\to +\infty} g(s) / s > 0$.
\end{itemize}
Moreover, assume that 
$h\colon\R\to\R$ satisfies (h1) and (hg).
Then $F$ admits a radially symmetric minimizer in $\W$.
\end{corollary}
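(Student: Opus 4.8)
The plan is to reduce the non-convex problem for $g$ to the convex problem already solved in Theorem~\ref{t:convex}, by passing to the bipolar (convex envelope) $g^{**}$ and then recovering a genuine minimizer of the original functional from one of the relaxed functional. First I would introduce the relaxed functional
\[
F^{**}(u) := \int_{B_R} \left[ g^{**}(|\nabla u|) + h(u) \right]\, dx,
\qquad u\in\W,
\]
and observe that by construction $g^{**} \leq g$, so $F^{**} \leq F$ pointwise on $\W$, while $g^{**}(|z|)$ is convex, $g^{**}(0) = g(0)$ by (g0), and $\liminf_{s\to+\infty} g^{**}(s)/s = M > 0$ (since the convex envelope does not alter the linear growth rate from below). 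Hence $g^{**}$ satisfies (g1)--(g2), and together with the unchanged hypotheses (h1)--(hg) on $h$, Theorem~\ref{t:convex}(i) applies: $F^{**}$ admits a radially symmetric minimizer $u^{**}(x) = \rad(|x|)$ in $\W$, with an associated momentum $p\in\Wbasestar$ satisfying the Euler--Lagrange inclusions \eqref{f:EL1t}--\eqref{f:EL2t}, where now $\Gamma$ is built from $\partial g^{**}$.

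Next I would show that this $\rad$ is in fact a minimizer of the original $F$, which amounts to showing $F(\rad) = F^{**}(\rad)$, i.e.\ that $g(|\rad'(r)|) = g^{**}(|\rad'(r)|)$ for a.e.\ $r$. This is where the Euler--Lagrange inclusion \eqref{f:EL2t} is decisive. From \eqref{f:EL2t} we have $r^{1-N}p(r) \in \partial g^{**}(|\rad'(r)|)$ (using $\rad' \geq 0$ from the monotonicity in Claim~2, together with the gradient-constraint case), so $|\rad'(r)|$ lies in the subdifferential of the convex conjugate $(g^{**})^*$ at the point $r^{1-N}p(r)$. Since $(g^{**})^* = g^*$ and the subdifferential of $g^*$ takes values in the \emph{faces} of $\Dom g^{**}$ along which $g^{**}$ is affine-interpolating $g$, the standard fact is that whenever $s \in \partial g^*(p)$ one has $g^{**}(s) = g(s)$ \emph{provided} $g(0) = g^{**}(0)$ guarantees the left endpoint is covered; more precisely, the point $|\rad'(r)|$ realizes equality in Young's inequality $g^{**}(s) + g^*(p) = ps$, and on the set where $g^{**} < g$ (an open union of intervals on which $g^{**}$ is affine, none of which can contain a point where $g^{**}$ is differentiable with slope attaining the conjugate unless the endpoints are of the form where $g = g^{**}$) such equality in Young forces $s$ to be an endpoint of such an interval where $g(s) = g^{**}(s)$. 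I would spell this out by noting that $s\in\partial g^*(p)$ implies $s$ is an exposed/extreme-type point of $\{g = g^{**}\}$-closure, hence $g(s) = g^{**}(s)$.

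With $g(|\rad'|) = g^{**}(|\rad'|)$ a.e.\ established, the conclusion is immediate: for every $v\in\W$,
\[
F(\rad) = F^{**}(\rad) \leq F^{**}(v) \leq F(v),
\]
so $\rad$ minimizes $F$ over $\W$, and it is radially symmetric and (being the minimizer produced by Theorem~\ref{t:convex}) Lipschitz continuous. I expect the main obstacle to be the measure-theoretic argument in the previous paragraph — the passage from the Euler--Lagrange inclusion $r^{1-N}p(r)\in\partial g^{**}(|\rad'(r)|)$ to the pointwise identity $g(|\rad'(r)|) = g^{**}(|\rad'(r)|)$ for a.e.\ $r$. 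The subtlety is that on an open interval $]\alpha,\beta[$ where $g^{**}$ is affine and strictly below $g$, the subdifferential $\partial g^{**}$ is single-valued (equal to the constant slope) on all of $]\alpha,\beta[$, so a priori $|\rad'(r)|$ could land strictly inside such an interval; ruling this out requires using the structure of the inclusion \eqref{f:EL2t} more carefully — namely that $p(r)$ is \emph{prescribed} by \eqref{f:EL1t} through $\htilde$ and monotone, and then invoking either the strict monotonicity of $r\mapsto r^{1-N}p(r)$ (shown in the proof of Theorem~\ref{t:convex}(iii)) to conclude the slope $r^{1-N}p(r)$ takes the degenerate constant value only on a null set of $r$'s, or a relaxation/lower-semicontinuity argument showing that a minimizer of $F^{**}$ whose derivative lands in a "forbidden" affine region could be perturbed (by an oscillation replacing $\rad'$ by the two endpoint slopes $\alpha,\beta$ on a partition) to strictly decrease $F$ while keeping $F^{**}$ fixed — contradicting that $\rad$ already achieves $\min F^{**}$. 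Either route closes the gap; I would present the first (strict-monotonicity) route since the needed fact is already in the proof of Theorem~\ref{t:convex}.
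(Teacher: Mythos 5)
Your proposal is correct and follows essentially the same route as the paper: relax to $g^{**}$, apply Theorem~\ref{t:convex} to the relaxed functional, and then use the strict monotonicity of $r\mapsto r^{1-N}p(r)$ on $]r_0,R]$ (established in the proof of Theorem~\ref{t:convex}(iii)), together with (g0) on the flat region where $\rad'=0$, to force $|\rad'(r)|$ into the set where $g=g^{**}$ for a.e.\ $r$ --- exactly the argument the paper carries out by invoking the extreme points of $\epi g^{**}$ and the reasoning of Cellina--Perrotta. Be aware that your intermediate claim that $s\in\partial g^*(p)$ alone implies $g(s)=g^{**}(s)$ is false (as you yourself note in the final paragraph, $s$ can lie in the interior of an affine interval of $g^{**}$), so the strict-monotonicity route you ultimately select is not an optional refinement but the essential step.
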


\begin{proof}
The relaxed functional
\[
\overline{F}(u) := \int_{B_R} \left[
g^{**}(|\nabla u|) + h(u)
\right]\,dx,
\qquad u\in W^{1,1}_0(B_R),
\]
satisfies all the assumptions of Theorem~\ref{t:convex}, hence there exist a radial
minimizer $u(x) = \rad(|x|)$ of $\overline{F}$ in $\W$
and a momentum $p\in\Wbasestar$ such that \eqref{f:EL1t}--\eqref{f:EL2t} hold.

As in the proof of Theorem~\ref{t:convex}(iii), 
considering without loss of generality
$h\in C^2$ and $h'_-(0) > 0$,
we have already proved that $u$ is convex and 
there exists $r_0\in [0, R[$ such that
$\rad(r) = m$ for every $r\in [0, r_0[$,
and $\rad(r) > m$ for every $r\in ]r_0, R]$.
Moreover, the function $r^{1-N} p(r)$ is strictly increasing in $]r_0, R]$.

Let $P$ be the set of all $z\in\R$ such that
$(z, g^{**}(z))$
belongs to the set of the extremal points of the epigraph of $g^{**}$.
We recall that $g(z) = g^{**}(x)$ for every $z\in P$
(see \cite[Remark~5.3]{C4}).
Reasoning as in \cite{CePe1994} (see the proof of Theorem~2),
from the strict monotonicity of $r^{1-N} p(r)$ in $]r_0, R]$
follows that $|\rad'(r)| \in P$ for a.e.\ $r\in [r_0,R]$.
Since $\rad'(r) = 0$ for every $r\in [0,r_0[$,
we conclude that $\overline{F}_{\textrm{rad}}(\rad) = \Frad(\rad)$,
hence $\rad$ is a minimizer of $\Frad$.
\end{proof}

\def\cprime{$'$}
\begin{bibdiv}
\begin{biblist}

\bib{Carlson}{article}{
      author={Carlson, {D.A.}},
       title={Minimizers for nonconvex variational problems in the plane via
  convex/concave rearrangements},
        date={2017},
        ISSN={0022-247X},
     journal={J. Math. Anal. Appl.},
      volume={451},
      number={1},
       pages={175\ndash 196},
         url={https://doi.org/10.1016/j.jmaa.2017.01.097},
      review={\MR{3619233}},
}

\bib{Cel04}{article}{
      author={Cellina, {A.}},
       title={The classical problem of the calculus of variations in the
  autonomous case: relaxation and {L}ipschitzianity of solutions},
        date={2004},
        ISSN={0002-9947},
     journal={Trans. Amer. Math. Soc.},
      volume={356},
      number={1},
       pages={415\ndash 426},
         url={https://doi.org/10.1090/S0002-9947-03-03347-6},
      review={\MR{2020039}},
}

\bib{CePe1994}{article}{
      author={Cellina, {A.}},
      author={Perrotta, {S.}},
       title={On minima of radially symmetric functionals of the gradient},
        date={1994},
        ISSN={0362-546X},
     journal={Nonlinear Anal.},
      volume={23},
      number={2},
       pages={239\ndash 249},
         url={https://doi.org/10.1016/0362-546X(94)90045-0},
      review={\MR{1289130}},
}

\bib{CeTrZa}{article}{
      author={Cellina, {A.}},
      author={Treu, {G.}},
      author={Zagatti, {S.}},
       title={On the minimum problem for a class of non-coercive functionals},
        date={1996},
        ISSN={0022-0396},
     journal={J. Differential Equations},
      volume={127},
      number={1},
       pages={225\ndash 262},
         url={https://doi.org/10.1006/jdeq.1996.0069},
      review={\MR{1387265}},
}

\bib{Ces}{book}{
      author={Cesari, {L.}},
       title={Optimization---theory and applications},
      series={Applications of Mathematics (New York)},
   publisher={Springer-Verlag},
     address={New York},
        date={1983},
      volume={17},
        ISBN={0-387-90676-2},
}

\bib{Clar}{book}{
      author={Clarke, {F.H.}},
       title={Optimization and nonsmooth analysis},
      series={Canadian Mathematical Society series in mathematics},
   publisher={John Wiley and Sons},
     address={Toronto},
        date={1983},
}

\bib{Clar93}{article}{
      author={Clarke, {F.H.}},
       title={An indirect method in the calculus of variations},
        date={1993},
        ISSN={0002-9947},
     journal={Trans. Amer. Math. Soc.},
      volume={336},
      number={2},
       pages={655\ndash 673},
         url={https://doi.org/10.2307/2154369},
      review={\MR{1118823}},
}

\bib{ClarLo}{article}{
      author={Clarke, {F.H.}},
      author={Loewen, {P.D.}},
       title={An intermediate existence theory in the calculus of variations},
        date={1989},
        ISSN={0391-173X},
     journal={Ann. Scuola Norm. Sup. Pisa Cl. Sci. (4)},
      volume={16},
      number={4},
       pages={487\ndash 526 (1990)},
      review={\MR{1052732}},
}

\bib{C1}{article}{
      author={Crasta, {G.}},
       title={An existence result for noncoercive nonconvex problems in the
  calculus of variations},
        date={1996},
        ISSN={0362-546X},
     journal={Nonlinear Anal.},
      volume={26},
      number={9},
       pages={1527\ndash 1533},
         url={https://doi.org/10.1016/0362-546X(95)00010-S},
      review={\MR{1377471}},
}

\bib{C2}{article}{
      author={Crasta, {G.}},
       title={Existence of minimizers for nonconvex variational problems with
  slow growth},
        date={1998},
        ISSN={0022-3239},
     journal={J. Optim. Theory Appl.},
      volume={99},
      number={2},
       pages={381\ndash 401},
         url={https://doi.org/10.1023/A:1021774227314},
      review={\MR{1657041}},
}

\bib{C3}{article}{
      author={Crasta, {G.}},
       title={On the minimum problem for a class of noncoercive nonconvex
  functionals},
        date={1999},
        ISSN={0363-0129},
     journal={SIAM J. Control Optim.},
      volume={38},
      number={1},
       pages={237\ndash 253},
         url={https://doi.org/10.1137/S0363012997330701},
      review={\MR{1740598}},
}

\bib{C4}{article}{
      author={Crasta, {G.}},
       title={Existence, uniqueness and qualitative properties of minima to
  radially symmetric non-coercive non-convex variational problems},
        date={2000},
        ISSN={0025-5874},
     journal={Math. Z.},
      volume={235},
      number={3},
       pages={569\ndash 589},
         url={https://doi.org/10.1007/s002090000148},
      review={\MR{1800213}},
}

\bib{C6}{article}{
      author={Crasta, {G.}},
       title={Variational problems for a class of functionals on convex
  domains},
        date={2002},
        ISSN={0022-0396},
     journal={J. Differential Equations},
      volume={178},
      number={2},
       pages={608\ndash 629},
         url={https://doi.org/10.1006/jdeq.2000.4011},
      review={\MR{1879839}},
}

\bib{C7}{article}{
      author={Crasta, {G.}},
       title={Estimates for the energy of the solutions to elliptic {D}irichlet
  problems on convex domains},
        date={2004},
        ISSN={0308-2105},
     journal={Proc. Roy. Soc. Edinburgh Sect. A},
      volume={134},
      number={1},
       pages={89\ndash 107},
         url={https://doi.org/10.1017/S0308210500003097},
      review={\MR{2039904}},
}

\bib{C8}{article}{
      author={Crasta, {G.}},
       title={A symmetry problem in the calculus of variations},
        date={2006},
        ISSN={1435-9855},
     journal={J. Eur. Math. Soc. (JEMS)},
      volume={8},
      number={1},
       pages={139\ndash 154},
         url={https://doi.org/10.4171/JEMS/41},
      review={\MR{2201279}},
}

\bib{CFG1}{article}{
      author={Crasta, {G.}},
      author={Fragal\`a, {I.}},
      author={Gazzola, {F.}},
       title={A sharp upper bound for the torsional rigidity of rods by means
  of web functions},
        date={2002},
        ISSN={0003-9527},
     journal={Arch. Ration. Mech. Anal.},
      volume={164},
      number={3},
       pages={189\ndash 211},
         url={https://doi.org/10.1007/s002050200205},
      review={\MR{1930391}},
}

\bib{CFG3}{article}{
      author={Crasta, {G.}},
      author={Fragal\`a, {I.}},
      author={Gazzola, {F.}},
       title={On a long-standing conjecture by {P}\'{o}lya-{S}zeg\"{o} and
  related topics},
        date={2005},
        ISSN={0044-2275},
     journal={Z. Angew. Math. Phys.},
      volume={56},
      number={5},
       pages={763\ndash 782},
         url={https://doi.org/10.1007/s00033-005-3092-9},
      review={\MR{2184904}},
}

\bib{CFG2}{article}{
      author={Crasta, {G.}},
      author={Fragal\`a, {I.}},
      author={Gazzola, {F.}},
       title={On the role of energy convexity in the web function
  approximation},
        date={2005},
        ISSN={1021-9722},
     journal={NoDEA Nonlinear Differential Equations Appl.},
      volume={12},
      number={1},
       pages={93\ndash 109},
         url={https://doi.org/10.1007/s00030-004-2024-2},
      review={\MR{2138936}},
}

\bib{CFG4}{article}{
      author={Crasta, {G.}},
      author={Fragal\`a, {I.}},
      author={Gazzola, {F.}},
       title={Some estimates for the torsional rigidity of composite rods},
        date={2007},
        ISSN={0025-584X},
     journal={Math. Nachr.},
      volume={280},
      number={3},
       pages={242\ndash 255},
         url={https://doi.org/10.1002/mana.200410478},
      review={\MR{2292147}},
}

\bib{CG2}{article}{
      author={Crasta, {G.}},
      author={Gazzola, {F.}},
       title={Web functions: survey of results and perspectives},
        date={2001},
        ISSN={0049-4704},
     journal={Rend. Istit. Mat. Univ. Trieste},
      volume={33},
      number={1-2},
       pages={313\ndash 326 (2002)},
      review={\MR{1912022}},
}

\bib{CG1}{article}{
      author={Crasta, {G.}},
      author={Gazzola, {F.}},
       title={Some estimates of the minimizing properties of web functions},
        date={2002},
        ISSN={0944-2669},
     journal={Calc. Var. Partial Differential Equations},
      volume={15},
      number={1},
       pages={45\ndash 66},
         url={https://doi.org/10.1007/s005260100117},
      review={\MR{1920714}},
}

\bib{CM1}{article}{
      author={Crasta, {G.}},
      author={Malusa, {A.}},
       title={Existence results for noncoercive variational problems},
        date={1996},
        ISSN={0363-0129},
     journal={SIAM J. Control Optim.},
      volume={34},
      number={6},
       pages={2064\ndash 2076},
         url={https://doi.org/10.1137/S0363012994278201},
      review={\MR{1416500}},
}

\bib{CM2}{article}{
      author={Crasta, {G.}},
      author={Malusa, {A.}},
       title={Euler-{L}agrange inclusions and existence of minimizers for a
  class of non-coercive variational problems},
        date={2000},
        ISSN={0944-6532},
     journal={J. Convex Anal.},
      volume={7},
      number={1},
       pages={167\ndash 181},
      review={\MR{1773181}},
}

\bib{CM3}{article}{
      author={Crasta, {G.}},
      author={Malusa, {A.}},
       title={Nonconvex minimization problems for functionals defined on vector
  valued functions},
        date={2001},
        ISSN={0022-247X},
     journal={J. Math. Anal. Appl.},
      volume={254},
      number={2},
       pages={538\ndash 557},
         url={https://doi.org/10.1006/jmaa.2000.7227},
      review={\MR{1805523}},
}

\bib{CM4}{article}{
      author={Crasta, {G.}},
      author={Malusa, {A.}},
       title={Geometric constraints on the domain for a class of minimum
  problems},
        date={2003},
        ISSN={1292-8119},
     journal={ESAIM Control Optim. Calc. Var.},
      volume={9},
       pages={125\ndash 133},
         url={https://doi.org/10.1051/cocv:2003003},
      review={\MR{1957093}},
}

\bib{CM5}{article}{
      author={Crasta, {G.}},
      author={Malusa, {A.}},
       title={On the existence and uniqueness of minimizers for a class of
  integral functionals},
        date={2005},
        ISSN={1021-9722},
     journal={NoDEA Nonlinear Differential Equations Appl.},
      volume={12},
      number={2},
       pages={129\ndash 150},
         url={https://doi.org/10.1007/s00030-005-0007-6},
      review={\MR{2184077}},
}

\bib{CM9}{article}{
      author={Crasta, {G.}},
      author={Malusa, {A.}},
       title={A sharp uniqueness result for a class of variational problems
  solved by a distance function},
        date={2007},
        ISSN={0022-0396},
     journal={J. Differential Equations},
      volume={243},
      number={2},
       pages={427\ndash 447},
         url={https://doi.org/10.1016/j.jde.2007.05.026},
      review={\MR{2371795}},
}

\bib{EkTem}{book}{
      author={Ekeland, {I.}},
      author={T\'{e}mam, {R.}},
       title={Convex analysis and variational problems},
     edition={English},
      series={Classics in Applied Mathematics},
   publisher={Society for Industrial and Applied Mathematics (SIAM),
  Philadelphia, PA},
        date={1999},
      volume={28},
        ISBN={0-89871-450-8},
         url={https://doi.org/10.1137/1.9781611971088},
        note={Translated from the French},
      review={\MR{1727362}},
}

\bib{GiDirect}{book}{
      author={Giusti, {E.}},
       title={Direct methods in the calculus of variations},
   publisher={World Scientific Publishing Co., Inc., River Edge, NJ},
        date={2003},
        ISBN={981-238-043-4},
         url={https://doi.org/10.1142/9789812795557},
      review={\MR{1962933}},
}

\bib{Greco2012}{article}{
      author={Greco, {A.}},
       title={Minimization of non-coercive integrals by means of convex
  rearrangement},
        date={2012},
        ISSN={1864-8258},
     journal={Adv. Calc. Var.},
      volume={5},
      number={2},
       pages={231\ndash 249},
         url={https://doi.org/10.1515/acv.2011.013},
      review={\MR{2912700}},
}

\bib{Kro}{article}{
      author={Kr\"{o}mer, {S.}},
       title={Existence and symmetry of minimizers for nonconvex radially
  symmetric variational problems},
        date={2008},
        ISSN={0944-2669},
     journal={Calc. Var. Partial Differential Equations},
      volume={32},
      number={2},
       pages={219\ndash 236},
         url={https://doi.org/10.1007/s00526-007-0137-2},
      review={\MR{2389990}},
}

\bib{Rock71}{article}{
      author={Rockafellar, R.~T.},
       title={Existence and duality theorems for convex problems of {B}olza},
        date={1971},
        ISSN={0002-9947},
     journal={Trans. Amer. Math. Soc.},
      volume={159},
       pages={1\ndash 40},
         url={https://doi.org/10.2307/1995996},
      review={\MR{0282283}},
}

\end{biblist}
\end{bibdiv}

\end{document}